%
%
%
%
%
\RequirePackage{fix-cm}
\documentclass[smallextended]{svjour3}       
\smartqed  

\usepackage{amsmath}
\usepackage{amssymb}   
\usepackage{graphicx}

\usepackage{comment}

\usepackage{hyperref}
\hypersetup{colorlinks,linkcolor={blue},citecolor={blue},urlcolor={red}}

\newtheorem{thm}{Theorem}[section] 
 
\newtheorem{cor}[thm]{Corollary} 
\newtheorem{prop}[thm]{Proposition} 
 
\newtheorem{defn}[thm]{Definition} 
\newtheorem{rem}[thm]{Remark}

\usepackage[labelsep=endash]{caption}

 \numberwithin{equation}{section}
\numberwithin{thm}{section}

\def\p{\partial}
\def \ds{\displaystyle}
\def \vs{\vspace*{0.1cm}}

%
\usepackage{latexsym}
%
%
%
\begin{document}

\title{A mean curvature type flow with capillary boundary in a unit ball
}


\author{Guofang Wang        \and
        Liangjun Weng 
}


\institute{Guofang Wang \at
             Mathematisches Institut, Albert-Ludwigs-Universit\"{a}t Freiburg, Freiburg im Breisgau, 79104, Germany \\
              \email{guofang.wang@math.uni-freiburg.de}           
           \and
           Liangjun Weng \at
           School of Mathematical Sciences, University of Science and Technology of China, Hefei, 230026, P. R. China\\
               Mathematisches Institut, Albert-Ludwigs-Universit\"{a}t Freiburg, Freiburg im Breisgau, 79104, Germany
             \\  \email{ljweng08@mail.ustc.edu.cn; liangjun.weng@math.uni-freiburg.de} 
}

\date{Received: date / Accepted: date}

\maketitle

\begin{abstract}
In this paper, we study a mean curvature type flow with  capillary boundary in the unit ball. Our flow preserves the volume of the bounded domain enclosed by the hypersurface, and monotonically decreases an energy functional $E$. We show that it has the longtime existence and subconverges to spherical caps. As an application, we solve an isoperimetric problem for hypersurfaces with  capillary boundary.
\keywords{Mean curvature flow \and capillary boundary \and conformal Killing vector field \and a priori estimate \and isoperimetric  problem}
\subclass{Primary: 53C44 Secondary: 35K93}
\end{abstract}

\section{Introduction}
\label{intro}
In this paper, we are interested in a  mean curvature type flow in the unit ball $\mathbb{B}^{n+1}\subset \mathbb{R}^{n+1}$ with capillary boundary. 
Roughly speaking, given a Riemannian manifold $N^{n+1}$ with a smooth boundary $\partial N$, 
a hypersurface with capillary boundary in $N$ is an immersed hypersurface which intersects $\partial N$ at a constant contact angle $\theta\in (0,\pi)$.

For closed hypersurfaces, the mean curvature flow plays an important role in geometric analysis and has been extensively studied. One of 
classical results proved by  Huisken \cite{Hui1984} states that it contracts a closed convex  hypersurface into a round point.
Mean curvature type flows with a constraint play an important role in the study of isoperimetric problems.
The following curve-shortening (and area-preserving) flow was studied  by Gage \cite{Gage1986}. 
Let $\gamma:\mathbb{S}^1\times [0,T)\to \mathbb{R}^2$ satisfy \begin{center}
	$\partial_t\gamma=(\kappa-\frac{2\pi}{L})\nu,$
\end{center}
where $\kappa$ is the geodesic  curvature of $\gamma$, $L$ is the length of the curve at scale $t$, and $\nu$ is the outward unit normal vector of curve $\gamma(\cdot,t)$.
In a higher dimensional  Euclidean space, Huisken introduced a non-local type mean curvature flow in \cite{Hui1987}: Given a closed, connected 
hypersurface $M$, consider a family of embeddings $x:M\times[0,T)\to\mathbb{R}^{n+1}$ satisfies 		\begin{center}
	$	\partial_tx= (c(t)-H)\nu,$
\end{center}
where $c(t):=\frac{\int_{M_t} H d\mu}{|M_t|}$ is the average of the mean curvature $H$ of $M_t:=x(M,t)$
and $\nu$ is the unit outward normal vector field of $M_t$.  Huisken proved that such a  volume preserving flow converges to a round sphere 
if the initial hypersurface is uniformly convex.  
There has been a lot of work on  such geometric flows. Here
we  just  mention  further \cite{AF} for studying such kind of 
flow in the case where the ambient space is Riemannian manifold and  \cite{Mc} for the extension to a general mixed volume preserving mean
curvature flow.  As one of applications, such a volume (or area)-preserving flow could be used to prove optimal geometric inequalities.  In order to  establish optimal geometric inequalities, 
there is another type mean curvature flow, which is first introduced by Guan and Li \cite{GL} inspired by the Minkowski formulas.
A flow $x:M\times [0,T)\to\mathbb{M}^{n+1}_k$ satisfies
\begin{center}$
	\partial_tx=(n\phi'(\rho)-Hu)\nu,$
\end{center}
where $u$ is the support function of hypersurface $x(M,\cdot)$ and $\mathbb{M}^{n+1}_k$ is the space form with constant sectional curvature
$k$ and metric $ds^2:=d\rho^2+\phi^2(\rho)g_{_{\mathbb{S}^{n}}}$. This flow is also volume preserving and 
area decreasing by the Minkowski formulas. They obtained the longtime existence of this flow and proved that it smoothly converges to a round
sphere if the initial hypersurface is star-shaped. 
As a result, this yields a flow proof of classical Alexandrov-Fenchel inequalities of quermassintegrals in convex geometry. 
Recently, they obtained that a similar phenomenon also holds for the general warped produced manifold in \cite{GuanLiWang} 
jointed with Wang.  For the methods which use a fully nonlinear flow to establish geometric inequalities, we refer also to \cite{GuanWang}.
Last but not least, we recommend the literature \cite{A}, \cite{AW}, \cite{AW2}, \cite{CM}, \cite{SX} 
and references therein for  extensions to general anisotropic and  fully nonlinear curvature flows in various  ambient spaces.

 There has been a great interest in the investigation of hypersurfaces with non-empty boundaries in the last thirty years. 
For instance, Stahl \cite{St} considered the mean curvature flow with free boundary in the  Euclidean space, and he showed 
that the solution either has the longtime existence or the curvature and its derivatives blow up at the maximal time.  Later,
Marquardt \cite{M1} considered the inverse mean curvature flow for  hypersurfaces with boundary perpendicular to a convex cone, 
and proved that it has the long time existence and  converges to a piece of round sphere, if the initial hypersurface is star-shaped 
and strictly mean convex. Recently  Lambert-Scheuer \cite{LS2} studied the same flow as \cite{M1} but with the supporting hypersurface
being a sphere instead of a cone. They proved that a convex hypersurface  which is perpendicular to a sphere 
along the boundary converges to a flat disk in certain sense. 
As  a nice geometric application of this flow they proved in \cite{LS1} a Willmore type inequality. 
We also would like to mention the recent articles \cite{WX2} and \cite{SWX} for a  mean curvature type flow and a fully nonlinear inverse curvature type flow respectively in the unit ball with free boundary, where new geometric inequalities were proved as applications.
 For the study of a nonparametric mean curvature flow
with free or capillary type boundaries, we refer to  \cite{AWu}, \cite{GMWW}, \cite{LW1} and \cite{Hui1989}.

Those results motivate us to consider the following mean curvature type flow for hypersurfaces  with capillary boundary. To be more precise, let $\Sigma_0$ be a properly embedded  compact smooth hypersurface in $\overline{\mathbb{B}}^{n+1} (n\geq 2)$ with capillary boundary $\partial \Sigma_0\subset \mathbb{S}^n:=\partial \mathbb{B}^{n+1}$, which is given by $x_0:M \to \overline{\mathbb{B}}^{n+1}$ and $M$ is a compact manifold with smooth boundary $\partial M$. 
In other words, $\mbox{int}(\Sigma_0)=x_0(\mbox{int}(M))$, and 
$\partial\Sigma_0=x_0(\partial M)$ intersects $\partial \mathbb{B}^{n+1}$ at a constant contact angle $\theta\in(0,\pi)$. Consider a family of embeddings
$x:M\times [0,T) \to \overline{\mathbb{B}}^{n+1}$ with $x(\partial M,\cdot )\subset \partial\overline{\mathbb{B}}^{n+1}$   such that

\begin{equation}\label{MCF with angle}
\begin{array}{rcll}
(\partial_t x )^\perp&=&f\nu \quad &
\hbox{ in }M\times[0,T),\\
\langle \nu,\overline{N}\circ x \rangle &=& -\cos \theta 
\quad & \hbox{ on }\partial M\times [0,T),\\
x(\cdot,0) & =& x_0(\cdot) \quad & \text{ on }   {M}, 
\end{array}
\end{equation}
where $$f:=n\langle x,a\rangle+n\cos\theta \langle \nu,a\rangle -H\langle X_a,\nu\rangle, \qquad \hbox{ for } a\in {\mathbb S}^n ,$$  $\nu$ and $
H$ are the unit normal vector field and the  mean curvature of hypersurface $x(\cdot,t)$ resp.,
$\overline{N}$ is the unit outward normal vector field of $\mathbb{S}^n$ , the contact angle $\theta\in  (0, \pi)$  is
a constant and the vector field $X_a$ will be defined and discussed in the next paragraph.
  Here, for a vector field $\xi$ along a hypersurface $x$, we define its normal part by $\xi^\perp:= \langle \xi, \nu\rangle \nu.$
The choice of $f$ is motivated by new Minkowski formulas   proved in  \cite{WX1}.
If $\theta=\frac{\pi}{2}$, it corresponds to the free boundary problem of parabolic setting, which was studied by Wang and Xia in \cite{WX2}.

Before we state our main results, we clarify the notation $X_a$ used above. In this paper  $X_a$ is a vector field defined by 
\begin{equation}\label{a_1}
X_a:=\langle x,a\rangle x-\frac{1}{2}\big(|x|^2+1\big)a,
\end{equation}
where $a$ is a fixed unit vector in $\mathbb{R}^{n+1}$. One can easily to check 
that $X_a$ is a conformal Killing vector field in $\mathbb{B}^{n+1}$. In fact,  $X_a$ is exactly the pull back of the position vector field under  
a conformal transformation from the unit ball to the half Euclidean space.  See Section 3.2 for the precise discussion. 
We say that a properly embedded hypersurface $\Sigma$ in $\mathbb{R}^{n+1}$ is star-shaped with respect to $a$ if $\Sigma$ intersects each integral curve of $X_a$ only once. For simplicity
in this paper we define a hypersurface of star-shaped by a stronger condition that 
$$
\langle X_a,\nu\rangle >0
$$
holds everywhere in $M$.  
Our main theorem is the following 
\begin{thm}\label{Main thm}If the initial hypersurface is a  star-shaped hypersurface  with capillary boundary in the unit ball and the
	contact angle $ \theta$ satisfies $| \cos \theta | < \frac{3n+1}{5n-1}$, then  flow  \eqref{MCF with angle} exists globally 
	with uniform $C^{\infty}$-estimates. Moreover,  $x(\cdot,t)$  subsequently  converges  to a spherical cap  in the $C^{\infty}$ topology as $t\to \infty$, whose enclosed domain has the same volume as the domain enclosed by $\Sigma_0$.
\end{thm}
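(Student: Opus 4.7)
The plan is first to reformulate \eqref{MCF with angle} as a scalar parabolic equation on a fixed domain. Since $X_a$ is a conformal Killing vector field in $\mathbb{B}^{n+1}$ which, as announced for Section 3.2, is the pullback of the Euclidean position vector under a conformal diffeomorphism of $\mathbb{B}^{n+1}$ onto a half-space, its integral curves foliate $\mathbb{B}^{n+1}$, and the spherical caps meeting $\mathbb{S}^n$ at angle $\theta$ with axis $a$ play the role of round spheres. Crucially $X_a$ is tangent to $\mathbb{S}^n$ (at $|x|=1$, $\langle X_a,x\rangle=0$), so the capillary boundary is automatically preserved. The star-shaped assumption lets us write $\Sigma_t$ as a ``radial'' graph $\rho(\xi,t)$ over a reference cap, converting \eqref{MCF with angle} into a scalar fully nonlinear parabolic PDE with an oblique (capillary) boundary condition; short-time existence then follows from standard parabolic theory.

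Next I would verify the two variational properties motivating the choice of $f$. Using the capillary Minkowski formula of \cite{WX1} together with the conformal-Killing identity for $X_a$, a direct computation shows that the enclosed volume $|\Omega_t|$ is preserved, while the capillary energy
\[
\mathcal{E}(t) := |\Sigma_t| - \cos\theta\,|\partial\Omega_t\cap\mathbb{S}^n|
\]
satisfies $\frac{d}{dt}\mathcal{E}(t) = -\int_{\Sigma_t} f^2\, d\mu$. This monotonicity is what will ultimately drive $f\to 0$.

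The a priori estimates are then obtained in the classical order. (1) Preservation of star-shapedness: derive the evolution equation for $\langle X_a,\nu\rangle$ and apply the parabolic maximum principle, using the conformal-Killing property in the interior and a Hopf/boundary argument relying on the tangency of $X_a$ to $\mathbb{S}^n$ together with the capillary condition. (2) $C^0$ and $C^1$ bounds on $\rho$, which follow from the preserved star-shapedness combined with the volume constraint and the bounded energy. (3) Uniform upper bound on $H$: apply the maximum principle to an auxiliary quantity of the form $H/\langle X_a,\nu\rangle$, augmented by a boundary correction involving $\langle a,\nu\rangle$ to neutralise the boundary term produced by the oblique condition; the lower bound on $H$ then follows from the upper bound on $f$ and the already established $C^1$ estimate. (4) Once $|A|$ is uniformly bounded, Krylov--Safonov yields $C^{2,\alpha}$ and Schauder bootstrapping gives uniform $C^\infty$ estimates.

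The principal obstacle is step (3), namely the curvature estimate at the capillary boundary: the normal derivative of the natural test function $H/\langle X_a,\nu\rangle$ along $\partial\Sigma_t$ produces commutator terms and contributions from the second fundamental form of $\mathbb{S}^n$, whose sign is controllable only when $|\cos\theta|$ is small enough, and a careful tuning of the boundary correction converts these structural constraints into the explicit algebraic condition $|\cos\theta|<\frac{3n+1}{5n-1}$ of the theorem. With uniform $C^\infty$ bounds in hand, longtime existence is immediate, and the integrability $\int_0^{\infty}\!\!\int_{\Sigma_t} f^2\, d\mu\, dt<\infty$ forces $f\to 0$ along a smoothly convergent subsequence $\Sigma_{t_k}\to\Sigma_\infty$. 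The limit hypersurface then satisfies $H\langle X_a,\nu\rangle = n\langle x,a\rangle+n\cos\theta\,\langle\nu,a\rangle$, and the rigidity part of the capillary Minkowski formula of \cite{WX1} identifies $\Sigma_\infty$ as a spherical cap, uniquely determined by the preserved volume up to the rotations fixing $a$.
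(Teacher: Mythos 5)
Your high-level plan is in the right spirit (conformal transformation to a half-space picture, barriers from spherical caps, energy monotonicity driving subconvergence), but there are two concrete errors that matter.

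First, the energy-decay formula. Along the flow the boundary term vanishes (since $\mu-\cos\theta\,\overline\nu=\sin\theta\,\overline N$ and the boundary stays on $\mathbb{S}^n$), so $\frac{d}{dt}E=\int_M Hf\,dA$, not $-\int_M f^2\,dA$. The flow is not an $L^2$-gradient flow of $E$. One has to insert the two integral identities (the capillary analogue of the Minkowski formula and the identity relating $\int_M H\langle\nu,a\rangle$ to a boundary integral of $H-h(\mu,\mu)$), after which
\[
\frac{d}{dt}E(M_t)=\int_M\Bigl(\tfrac{2n}{n-1}\sigma_2(\kappa)-H^2\Bigr)\langle X_a,\nu\rangle\,dA
=-\tfrac{1}{n-1}\int_M\sum_{i<j}(\kappa_i-\kappa_j)^2\langle X_a,\nu\rangle\,dA\le0.
\]
This is the expression that actually integrates in time and forces the subsequential limit to be totally umbilical, hence a spherical cap. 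With your incorrect formula the argument does not close without additional work (you would still need to know that $f\equiv0$ plus capillary boundary characterizes spherical caps, which is a separate rigidity statement).

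Second, and more structurally, you misplace the technical core. After the conformal map $\varphi$ the flow becomes a \emph{quasi-linear} divergence-form parabolic equation for $u=\log\rho$ on $\mathbb{S}^n_+$ with a nonlinear Neumann (capillary) boundary condition, not a fully nonlinear one. Consequently, once uniform $C^0$ and $C^1$ bounds for $u$ are in hand, the higher-order estimates follow from standard quasi-linear parabolic theory (Ladyzhenskaya--Solonnikov--Ural'tseva): no separate second-fundamental-form maximum-principle estimate is needed, and your step (3) is redundant. Conversely, your step (2), where you assert that the $C^1$ bound ``follows from the preserved star-shapedness combined with the volume constraint and the bounded energy,'' is exactly the hard step, and it does \emph{not} follow from those soft facts. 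Preserved star-shapedness gives $\langle X_a,\nu\rangle>0$ but no uniform positive lower bound; the gradient estimate requires a dedicated maximum-principle argument on a test function of the form $(1+Kd)\sqrt{1+|\nabla u|^2}+\cos\theta\,\sigma(\nabla u,\nabla d)$ (with $d=\operatorname{dist}(\cdot,\partial\mathbb{S}^n_+)$), and it is precisely there, in balancing the good quadratic terms against the contribution of the capillary Neumann condition, that the angle restriction $|\cos\theta|<\frac{3n+1}{5n-1}$ appears. Attributing the angle condition to a curvature estimate misses where the constraint actually comes from, and so the proposal as written leaves the decisive a priori bound unjustified.
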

If $\theta=\frac{\pi}{2}$, i.e., the free boundary case, this theorem was proved recently by Wang and Xia in \cite{WX2}, where they also proved the global  convergence.
The free boundary case usually corresponds to a homogeneous or linear Neumann boundary value conditions,
see \cite{Hui1989}, \cite{LS1}, \cite{M1}, \cite{SWX}, \cite{St} and \cite{WX2} for example. 
However, the capillary boundary case  in general  relates to a nonlinear type Neumann boundary value condition, which is more complicated and 
technically more  difficult to handle from the analytic  viewpoint. This difficulty usually prevents us to obtain estimates for a full range of 
$\theta \in (0, \pi)$. For instance, Guan  obtained the gradient estimate (depending on the time $T$) of solution in \cite{Guan1998} for a nonparametric curvature flow with capillary boundary for angle satisfying $|\cos\theta|<\frac{\sqrt{3}}{2}$.
Recently, the authors \cite{GMWW} obtained the uniformly gradient estimate (independent of time $T$) for the nonparametric mean curvature flow with capillary boundary for $\theta$ 
in a small 
neighborhood of $\frac \pi 2$.
In this paper we obtain for our flow \eqref{MCF with angle} a better range $|\cos\theta|<\frac{3n+1}{5n-1}$.
The reason why we can have a bigger  range of the contact angle is due to an observation that  
equation \eqref{salar equation of tilde} has  a good term when we carry out the gradient estimate.  
See the proof of Proposition \ref{gradient estimate uniformly}. Also due to this difficulty, 
we can only prove the subsequence convergence of this flow and are  not  able to show that the limits are the same spherical cap at the moment. We will consider this problem in the near future.
Nevertheless, the limits  have the same radius and hence we can provide a flow proof for 
the isoperimetric problem for hypersurfaces with capillary boundary in the unit ball 
\begin{cor}
	Among   star-shaped capillary boundary hypersurfaces with a volume constraint  the spherical caps given in Remark \ref{static model} are the only minimizers of  the energy  functional $E$ defined in \eqref{total energy} below, provided that 
	 the contact angle $\theta$ satisfies
	$|\cos\theta|<\frac{3n+1}{5n-1}$,.
	
\end{cor}

The Corollary follows from Theorem \ref{Main thm} and the crucial properties that the flow preserves the enclosed  volume and decreases the energy functional $E$, which are proved  in Subsection 2.3 by the new Minkowski formulas
established in \cite{WX1}.

\

\noindent\textit{This article is organized as follows.} In Section 2, we give some preliminaries about hypersurfaces with capillary boundary 
and our mean curvature type flow.
In Section 3, we convert the flow to a scalar equation on semi-sphere with the help of a conformal transformation. 
In the last Section, we establish a priori estimates and prove the main theorem.

\section{Preliminaries} 
 
 In this Section we provide basic facts of capillary hypersurfaces and prove the crucial facts of our flow in Proposition \ref{energy de}  by using the new Minkowski formulas obtained in \cite{WX1}.
 For convenience of the reader we provide complete proofs.
 For more  information about capillary hypersurfaces we  refer to the wonderful exposition book \cite{Finn1986}.
 
 \subsection{Integral identities}
In this paper we consider hypersurfaces  $\Sigma\subset\overline{\mathbb{B}}^{n+1}$ with capillary boundary $\partial\Sigma$ on $\partial \overline{\mathbb{B}}^{n+1}$  which will be precisely defined below. 
Since we will use a flow to study such hypersurfaces, it will convenience to use the parametrization:
Let $x:M\to \overline{\mathbb{B}}^{n+1}$ be an isometric immersion of an orientable $n$-dimensional compact manifold $M$ with smooth boundary $\partial M$ such that  $\Sigma:=x(M)$ and
 $\partial\Sigma:=x(\partial M)$. 
 However, we will identity $M$ with $\Sigma$ and $\p M$ with $\p\Sigma$, if there is no confusion.

 Let $\overline N$ be the unit outward normal $\overline{N}$ of the unit sphere $\partial \mathbb{B}^{n+1}$.
 Let $\Sigma\subset \overline{\mathbb{B}}^{n+1}$ be 
 a smooth oriented hypersurface  with boundary $\partial \Sigma$ satisfying 
 $\mbox{int}(\Sigma)\subset \mathbb{B}^{n+1}$ and $\partial \Sigma\subset \partial \mathbb{B}^{n+1}$.
 $\Sigma$ divides the unit ball into  two parts. We denote one part by $\Omega$ and define $\nu$ the unit outward normal vector field of $\Sigma$ w.r.t. 
 $\Omega$. 
 Let $\mu$ be  the unit outward conormal vector field along $\partial\Sigma$  and 
 $\overline{\nu}$ be the unit normal to $\partial\Sigma$ in $\partial\mathbb{B}^{n+1}$ such that $\{\nu,\mu\}$ and $\{\overline{\nu},\overline{N}\}$ 
 have the same orientation in the normal bundle of $\partial\Sigma\subset\overline{\mathbb{B}}^{n+1}$. See Figure 1.

 We call the angle between  $-\nu $ and $\overline N$
  {\it contact angle} and denote it by $\theta$. It follows
 \begin{equation*}
 \begin{array}{rcl}
 \overline{N}&=&\sin\theta \mu-\cos\theta \nu,
 \\
 \overline{\nu}&=&\cos\theta \mu+\sin\theta \nu.
 \end{array}
 \end{equation*}
 or equivalently
 \begin{equation*}
 \begin{array}{rcl}
 \mu&=&\sin\theta \overline{N}+\cos\theta \overline{\nu}
 \\
 \nu&=&-\cos\theta \overline{N}+\sin\theta \overline{\nu}.
 \end{array}
 \end{equation*}
     \begin{figure} \vspace{20mm}\includegraphics[width=0.6\linewidth]{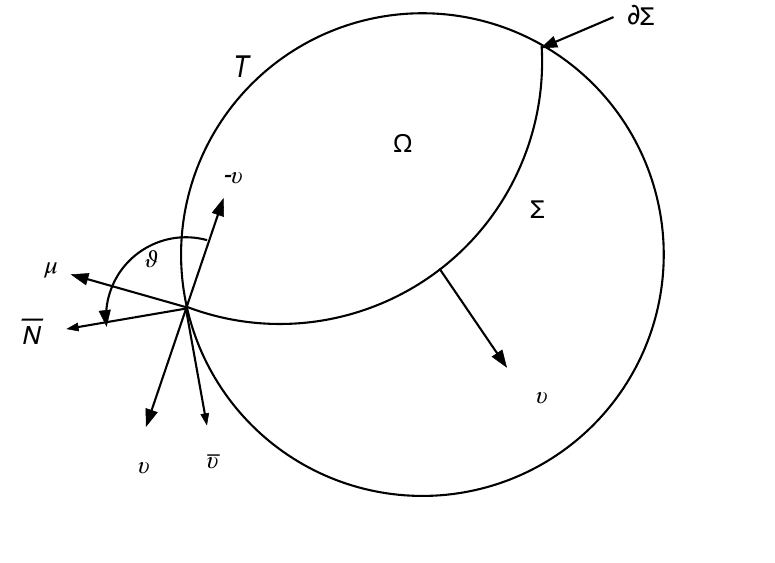} \label{fig1}  \caption{ $\Sigma=x(M)$  and $ \partial\Sigma=x(\partial M)$} \end{figure}

 \begin{defn}\label{capillary bdry}
 	Given a smooth oriented hypersurface $\Sigma\subset \overline{\mathbb{B}}^{n+1}$ with $\mbox{int}(\Sigma)\subset \mathbb{B}^{n+1}$ and 
 	$\partial \Sigma\subset \partial \mathbb{B}^{n+1}$, we call that  $\partial\Sigma$ is a capillary boundary, if the contact angle $\theta\in(0,\pi)$ is constant along $\partial \Sigma$.
 	Namely,
 	$$\langle \mu,\overline{N}\rangle=\sin\theta$$
 	is constant on $\partial \Sigma$. 
 	In particular, if $\theta=\frac{\pi}{2}$,  i.e.,  $\Sigma$ meets $\partial \mathbb{B}^{n+1}$ orthogonally, we call that $\partial\Sigma$ is a free boundary.	
 \end{defn}

 We denote $D$ and $\nabla$ derivatives on $(\mathbb{B}^{n+1},\delta_{\mathbb{B}^{n+1}})$
 and $(M,g)$ resp., where $\delta_{\mathbb{B}^{n+1}}$ is the standard Euclidean metric and $g$ is the induced metric on $M$.

 Recall that $X_a$ is the conformal vector field defined  by \eqref{a_1}.
 Decompose $X_a$ into  $ { X_a:=X_a^T+\langle X_a,\nu\rangle\nu} $, where $X_a^T$ is the tangential projection of $X_a$ on $\Sigma$. 
 It is clear to see that $X_a:=\langle x,a\rangle x-a$  on $\p\Sigma$ and $\overline{N}=x$ on 
 $\partial \mathbb{B}^{n+1}$, which follows that   
 \begin{equation}\label{tang killing metric}
 \begin{split}
 \langle X_a^T,\mu\rangle & =\langle X_a,\mu\rangle = \langle X_a, \sin\theta N+\cos\theta \overline{\nu}\rangle
 \\& = \cos\theta\langle X_a,\overline{\nu}\rangle= \cos\theta \langle \langle x,a\rangle x-a,\overline{\nu}\rangle= -\cos\theta \langle a,\overline{\nu}\rangle.
 \end{split}
 \end{equation}
 Let $h$ be the second fundamental form of the hypersurface $\Sigma$ given by $h(X,Y):=\langle D_X\nu,Y\rangle$ for any $X,Y\in T\Sigma$ with $\Sigma:=x(M)$ and $H$ is the mean curvature of $\Sigma$.
 Note that  \begin{align}\label{vanish 2nd F.F. of e,mu}
  h(e,\mu)=0
 \end{align} for any $e\in T(\partial \Sigma)$ and \begin{align}
  D_{\mu}\nu=h(\mu,\mu)\mu
 \end{align} (see  Lemma 3.1 in \cite{LX} or Proposition 2.1 in \cite{WX1} for a proof).  These two simple facts are important in the study of  capillary hypersurfaces.
 From these two face we have
 \begin{equation}\label{tang killing sec F.F.}
 \begin{split}
 h(X_a^T,\mu)& =\langle D_{\mu}\nu,X_a^T\rangle=h(\mu,\mu) \langle\mu,X_a^T\rangle=h(\mu,\mu) \langle \mu,\langle x,a\rangle x-a\rangle 
 \\&=h(\mu,\mu) \langle x,a\rangle \sin\theta-h(\mu,\mu)\langle\mu,a\rangle
 \\&=-h(\mu,\mu)\langle a,\cos\theta \overline{\nu}\rangle,
 \end{split}
 \end{equation} where we have used the fact $\mu=\sin\theta \overline{N}+\cos\theta \overline{\nu}$ in the last equality.
 
 The following proposition was proved for hypersurfaces with free boundary recently in \cite{WX1}. For completeness, we provide a proof here for hypersurfaces with  capillary boundary.
 
 \begin{prop}\label{sigma 2 cap}Let $x:M\to \overline{\mathbb{B}}^{n+1}$ be an embedded smooth hypersurface in $\mathbb{B}^{n+1}$ with capillary boundary of contact angle $\theta\in (0,\pi)$. Then
 	
 	\begin{eqnarray*}\nonumber
 		\int_{M} H\langle x,a\rangle dA&=&
 		\frac{2}{n-1} \int_{M} \sigma_2(\kappa) \langle X_a,\nu\rangle dA \\
 		&& +\frac{1}{n-1}\int_{\partial M} \big(H \langle X_a^T,\mu\rangle-h(X_a^T,\mu)\big)d\sigma,
 	\end{eqnarray*}
 	where $dA$ and $d\sigma$ are the area element of $M$ and $\partial M$ respectively with respect to the induced metric $g$, $\kappa:=(\kappa_1,\ldots,\kappa_n)$ are the principal curvatures of the Weingarten tensor $(g^{-1}h)$ and $\sigma_2(\kappa)$ is the $2$nd elementary symmetric function acting on the principal curvatures.
 \end{prop}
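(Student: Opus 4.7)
The plan is to apply the divergence theorem on $\Sigma=x(M)$ to the vector field obtained from the first Newton tensor $T_1:=Hg-h$ paired with the tangential projection $X_a^T$. This is the standard route to second order Minkowski-type identities, adapted here to the conformal Killing field $X_a$, and the three algebraic identities it rests on are $\mathrm{tr}_g(T_1)=(n-1)H$, $T_1^{ij}h_{ij}=H^2-|h|^2=2\sigma_2(\kappa)$, and $\mathrm{div}_\Sigma T_1=0$. The last one holds because the ambient space $\mathbb{R}^{n+1}$ is flat (Codazzi) and does not involve the boundary at all.

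First I would record the conformal Killing property of $X_a$. From $X_a=\langle x,a\rangle x-\tfrac{1}{2}(|x|^2+1)a$, a direct ambient calculation gives
\begin{equation*}
D_Y X_a=\langle Y,a\rangle x+\langle x,a\rangle Y-\langle x,Y\rangle a,\qquad Y\in T\mathbb{R}^{n+1},
\end{equation*}
so the symmetric part of $DX_a$ equals $\langle x,a\rangle\,\mathrm{Id}$. Writing $X_a=X_a^T+\langle X_a,\nu\rangle\nu$ and projecting onto $\Sigma$, the extra contribution from differentiating the normal factor produces a second fundamental form term, yielding
\begin{equation*}
\nabla_i(X_a^T)_j+\nabla_j(X_a^T)_i=2\langle x,a\rangle\,g_{ij}-2\langle X_a,\nu\rangle\,h_{ij}.
\end{equation*}

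Using the symmetry of $T_1$, the fact $\mathrm{div}_\Sigma T_1=0$, and the three algebraic identities above, I obtain
\begin{equation*}
\mathrm{div}_\Sigma\bigl(T_1(X_a^T,\cdot)\bigr)=T_1^{ij}\nabla_i(X_a^T)_j=(n-1)H\langle x,a\rangle-2\sigma_2(\kappa)\langle X_a,\nu\rangle.
\end{equation*}
Integrating over $M$, the divergence theorem contributes the boundary term
\begin{equation*}
\int_{\partial M}\langle T_1(X_a^T),\mu\rangle\,d\sigma=\int_{\partial M}\bigl(H\langle X_a^T,\mu\rangle-h(X_a^T,\mu)\bigr)\,d\sigma,
\end{equation*}
and dividing through by $n-1$ delivers the asserted formula.

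The computation is essentially mechanical and I do not foresee a serious obstacle. The two points requiring care are the sign in the $-2\langle X_a,\nu\rangle h_{ij}$ term (tied to the convention $h(X,Y)=\langle D_X\nu,Y\rangle$ fixed in the paper) and the verification that $\mathrm{div}_\Sigma T_1$ vanishes in the present ambient setting, which uses flatness of $\mathbb{R}^{n+1}$ but no property of $\partial\Sigma$. The capillary hypothesis itself plays no role in the derivation; it would enter only if one later rewrites the boundary integrand by means of the decomposition identities \eqref{tang killing metric} and \eqref{tang killing sec F.F.}.
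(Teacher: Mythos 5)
Your proposal is correct and follows essentially the same route as the paper: pair the symmetrized covariant derivative of $X_a^T$ (namely $\nabla_i(X_a^T)_j+\nabla_j(X_a^T)_i=2\langle x,a\rangle g_{ij}-2\langle X_a,\nu\rangle h_{ij}$) with the first Newton tensor $T_1=Hg-h$, use $\operatorname{tr}T_1=(n-1)H$, $T_1^{ij}h_{ij}=2\sigma_2$, and $\operatorname{div}_\Sigma T_1=0$, then integrate by parts. The only cosmetic difference is that you re-derive the pointwise Minkowski-type identity from the conformal Killing property of $X_a$, whereas the paper imports it directly as equation (3.5) of \cite{WX1}; you are also right that the capillary hypothesis is not used in this proposition.
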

 
 \begin{proof} Let $\{e_{i}\}_{i=1}^n$ be the orthonormal frame on $M$ and $e_{n+1}=\overline{N}$. 
 	By using equation (3.5) in \cite{WX1}, we have
 	\begin{equation*}
 	\frac{1}{2} \Big( \nabla_{i} (X_a^T)_{j}+\nabla_{j} (X_a^T)_{i}\Big)=\langle x,a \rangle g_{ij}-h_{ij}\langle X_a,\nu\rangle.
 	\end{equation*} This follows easily from the conformality of the vector field $X_a$. It follows that
	\begin{equation}\label{eq_a2}
	{\rm div}\, X_a^T= n\langle x, a \rangle-H\langle X_a,\nu\rangle.
	\end{equation}
 	Denote  the Newton tensor by $T_1(\kappa):=\frac{\partial\sigma_2}{\partial (g^{-1}h)}$ . In local coordinates, we have 
 	$T_1^{ij}:=\frac{\partial \sigma_2}{\partial h_{j}^{i}}$.
 	Multiplying    the both side of the above identity by $T_1^{ij}:=\frac{\partial \sigma_2}{\partial h_{j}^{i}}$ and integrating, we have 
 	\begin{eqnarray*}
 	\int_{M} T^{ij}_1(\kappa) \nabla_{i} (X_a^T)_{j}dA
 	&=& \ds\vs \int_{M} (Hg_{i j}-h_{ji} ) \cdot (\langle x,a \rangle g_{i j}-h_{i j}\langle X_a,\nu\rangle  )dA \\
	&=&\ds\vs \int_{M}  ( (n-1)H\langle x,a\rangle-(H^2-|h|^2)\cdot\langle X_a,\nu\rangle )dA \\
	&=&\ds \int_{M} \big ((n-1) H\langle x,a\rangle-2\sigma_2(h) \langle X_a,\nu\rangle \big)dA .
 	\end{eqnarray*}
 	Since   $X_a^T$ is the tangential projection of $X_a$ on $M$,  integrating by parts we have 
 	\begin{eqnarray*}\nonumber
 		\int_{M} T^{ij}_1(\kappa) \nabla_{i} (X_a^T)_{j}dA
 		&=& \int_{\partial M} T_1^{ij} (X_a^T)_{j} \mu_{i}d\sigma=\int_{\partial M} T_1(X_a^T,\mu)d\sigma
 		\\
 		&=& \int_{\partial M} \Big([H \langle X_a^T,\mu\rangle-h(X_a^T,\mu)\Big )d\sigma.
 	\end{eqnarray*}
 	Hence the proof is complete.
 \end{proof}
 The following property is also crucial for us.
 \begin{prop}\label{surface and bdry relation with H} Under the same conditions as in Proposition \ref{sigma 2 cap}, it holds that
 	\begin{equation*}
 	(n-1)\int_{M} H\langle \nu,a\rangle dA=\int_{\partial M}\big (H-h(\mu,\mu)\big )\langle \overline{\nu},a\rangle d\sigma.	\end{equation*}
 \end{prop}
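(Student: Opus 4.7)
My strategy is to reduce the proposition to a boundary-only statement by one integration by parts on $M$, and then to recognize the resulting boundary integrand as a divergence on the closed submanifold $\partial\Sigma\subset\mathbb{S}^n$, using the capillarity hypothesis.

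The input from $M$ is the standard identity $\Delta_M\langle x,a\rangle=-H\langle\nu,a\rangle$ (which follows from $\nabla\langle x,a\rangle=a^T$ together with $\nabla_\alpha(a^T)_\beta=-\langle a,\nu\rangle h_{\alpha\beta}$). Integrating on $M$ gives
\begin{equation*}
\int_M H\langle\nu,a\rangle\,dA \;=\; -\int_{\partial M}\langle a,\mu\rangle\,d\sigma,
\end{equation*}
so it suffices to prove the purely boundary identity
\begin{equation*}
\int_{\partial M}\bigl[(n-1)\langle a,\mu\rangle + (H-h(\mu,\mu))\langle \overline\nu,a\rangle\bigr]\,d\sigma \;=\; 0.
\end{equation*}

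To establish this I invoke the capillary decomposition $\nu=-\cos\theta\,\overline N+\sin\theta\,\overline\nu$. Because $\theta$ is constant along $\partial\Sigma$ and $D_e\overline N=e$ on $\mathbb{S}^n$, tracing over an orthonormal basis $\{e_i\}_{i=1}^{n-1}$ of $T(\partial\Sigma)$ yields
\begin{equation*}
H-h(\mu,\mu) \;=\; -(n-1)\cos\theta + \sin\theta\, h_{\partial\Sigma}^{\mathbb{S}^n},
\end{equation*}
where $h_{\partial\Sigma}^{\mathbb{S}^n}$ denotes the mean curvature of $\partial\Sigma$ in $\mathbb{S}^n$ with normal $\overline\nu$. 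Plugging this together with $\mu=\sin\theta\,\overline N+\cos\theta\,\overline\nu$ into the boundary integrand, the $\cos\theta$-terms cancel and the integrand reduces to $\sin\theta\bigl[(n-1)\langle a,\overline N\rangle + h_{\partial\Sigma}^{\mathbb{S}^n}\langle a,\overline\nu\rangle\bigr]$, which is precisely $-\sin\theta\cdot\mathrm{div}_{\partial\Sigma}(a^{T\partial\Sigma})$ on the closed $(n-1)$-submanifold $\partial\Sigma\subset\mathbb{S}^n$ (this last identification comes from decomposing $a$ in the frame $\{\overline N,\overline\nu,T(\partial\Sigma)\}$ and again using $D_{e_i}\overline N=e_i$). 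Its integral therefore vanishes, completing the proof.

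The only real obstacle is spotting the exact algebraic combination of $\langle a,\mu\rangle$ and $(H-h(\mu,\mu))\langle\overline\nu,a\rangle$ that collapses into a tangential divergence on $\partial\Sigma$. This is precisely what the capillary hypothesis makes possible, since the constancy of $\theta$ is what rewrites $H-h(\mu,\mu)$ as a sphere-intrinsic quantity on $\partial\Sigma$; once that cancellation is in hand, the rest of the argument is formal.
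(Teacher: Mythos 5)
Your proof is correct, but it takes a genuinely different route from the paper's. The paper introduces the auxiliary field $P_a:=\langle\nu,a\rangle x-\langle x,\nu\rangle a$, computes its covariant derivative, contracts against the Newton transformation $T_1^{\alpha\beta}=H\delta_{\alpha\beta}-h_{\beta\alpha}$, and integrates by parts once on $M$; the cross terms collapse to $(n-1)H\langle\nu,a\rangle$ in the interior, while on the boundary the relations $h(\mu,e)=0$, $x=\overline N$, $\langle x,\mu\rangle=\sin\theta$, $\langle x,\nu\rangle=-\cos\theta$ give $[H-h(\mu,\mu)]\langle\overline\nu,a\rangle$ directly. Your argument instead splits the work in two: first the elementary identity $\Delta_M\langle x,a\rangle=-H\langle\nu,a\rangle$ (the first Minkowski formula, requiring no Newton operator), which reduces everything to a boundary-only claim; then you rewrite $H-h(\mu,\mu)=-(n-1)\cos\theta+\sin\theta\,h^{\mathbb{S}^n}_{\partial\Sigma}$ via $\nu=-\cos\theta\,\overline N+\sin\theta\,\overline\nu$ and $D_{e_i}\overline N=e_i$, observe that the $\cos\theta$-terms cancel against $(n-1)\langle a,\mu\rangle$ exactly because $\theta$ is constant, and recognize the remainder as $-\sin\theta\,\mathrm{div}_{\partial\Sigma}(a^{T\partial\Sigma})$, which integrates to zero on the closed manifold $\partial\Sigma$. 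Both proofs are correct; the paper's approach is the natural companion to Proposition~\ref{sigma 2 cap} and would extend to higher Newton transformations, while yours is more elementary and conceptually transparent, making visible exactly where the capillary (constant-angle) hypothesis enters, namely in converting $H-h(\mu,\mu)$ into a quantity intrinsic to $\partial\Sigma\subset\mathbb{S}^n$.
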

 
 \begin{proof}
 	Set $P_a:=\langle \nu,a\rangle x-\langle x,\nu\rangle a$.  By a direct computation, we have
 	\begin{equation}\label{eq_a31}
 	\begin{split}
 	\nabla_{e_{j}}  \langle P_a,e_{i}\rangle  
 	&= \nabla_{e_{j}} \big( \langle \nu,a\rangle \langle x,e_{i}\rangle-\langle x,\nu\rangle \langle a,e_{i}\rangle\big)
 	\\&=\langle h_{jk}e_{k},a\rangle \langle x,e_{i}\rangle+\langle \nu,a\rangle \delta_{ij}+\langle \nu,a\rangle \langle x,-h_{ij}\nu\rangle
 	\\&\quad -\langle x,h_{jk}e_{k}\rangle\langle a,e_{i}\rangle+\langle x,\nu\rangle \langle a,h_{ij}\nu\rangle
 	\\&=\langle\nu,a\rangle \delta_{ij}+ h_{jk}a^{k}x^{i}-h_{jk}x^{k}a^{i}
 	\end{split}
 	\end{equation}
	and
	\begin{equation}\label{eq_a3}
	{\rm div}\, P^T_a=n \langle \nu, a\rangle.
	\end{equation}
 	Multiplying \eqref{eq_a31}  by $T_1^{ij}:=\frac{\partial \sigma_2}{\partial h_{j}^{i}}$, we obtain
 	\begin{equation}\nonumber
 	\begin{split}
 	T_1^{ij}(h)\cdot  {\nabla}_{e_{j}}   \langle P_a,e_{i}\rangle 
 	&= \big[H\delta_{ij}-h_{ji}\big]\cdot \big[\langle\nu,a\rangle \delta_{ij}+ h_{j k}a^{k}x^{i}-h_{jk}x^{k}a^{i}\big]
 	\\&=(n-1)H\langle\nu,a\rangle.
 	\end{split}
 	\end{equation}
 	 Integrating by parts we conclude that
 	\begin{equation}\nonumber
 	\begin{split}
 	(n-1)\int_{M} H\langle \nu,a\rangle dA   &=\int_{M}T_1^{ij}(h)\cdot  {\nabla}_{e_{j}}   \langle P_a,e_{i}\rangle  dA
 	\\&= \int_{\partial M} T_1^{ij}(h)\langle P_a,e_{i}\rangle \langle \mu,e_{j}\rangle d\sigma
 	\\&= \int_{\partial M} \big[H\delta_{ij}-h_{ji}\big]\langle P_a,e_{i}\rangle \langle \mu,e_{j}\rangle d\sigma
 	\\&=\int_{\partial M} \Big(H\langle \nu,a\rangle \langle x,\mu\rangle-H\langle x,\nu\rangle \langle a,\mu\rangle-h(\mu,x^T)\langle \nu,a\rangle+h(\mu,a^T)\langle x,\nu\rangle\Big)d\sigma
 	\\&=\int_{\partial M}\big[H-h(\mu,\mu)\big]\big[\langle \nu,a\rangle \langle x,\mu\rangle-\langle x,\nu\rangle \langle a,\mu\rangle\big]d\sigma
 	\\&=\int_{\partial M}\big[H-h(\mu,\mu)\big]\langle \overline{\nu},a\rangle d\sigma,
 	\end{split}
 	\end{equation} where we have used  equation \eqref{vanish 2nd F.F. of e,mu} in the fifth equality.
 	Therefore we complete the proof.
 \end{proof}
 
 \subsection{The first variation formulas}
 Let $x:(M,\partial M)\to (\overline{\mathbb{B}}^{n+1}, \partial\mathbb{B}^{n+1})$ be an isometric embedded of an orientable $n$-dimensional compact manifold $M$ with smooth boundary $\partial M$ such that  $\Sigma:=x(M)$ and
 $\partial\Sigma:=x(\partial M)$. We define the volume  functional of $x$ as the usual volume  of the $n+1$-dimensional domain $\Omega$  
 enclosed by $\Sigma$ and $\partial \mathbb{B}^{n+1}$  as in Figure 1.
 The so-called wetting area $W(\Sigma) $ is just the area of 
 the region $T:=\partial\Omega\cap \partial \mathbb{B}^{n+1}$, which is also bounded by $\partial \Sigma$ on $\partial \mathbb{B}^{n+1}$. The energy functional is defined as
 \begin{align}\label{total energy}
 E(x)=E(\Sigma):= \mbox{Area}(\Sigma)-\cos\theta\,  \mbox{Area}(T).
 \end{align}
 
Next we present the first variational formula for the energy functional $E$.
An admissible variation of $x$ is a differential map 
 $x: M\times(-\varepsilon,\varepsilon)\to\overline{\mathbb{B}}^{n+1}$ satisfying  that 
 $x_t(\cdot):=x(\cdot,t):M\to\overline{\mathbb{B}}^{n+1}$ is an immersion with 
 $x(\mbox{int}(M),t)\subset \mathbb{B}^{n+1}$ and $x(\partial M, t)\subset \partial\mathbb{B}^{n+1}$, and  $x(\cdot,0)=x_0(\cdot)$. 
 Denote the corresponding hypersurfaces by $\Sigma_t=x(M,t)$, its enclosed domain $\Omega_t$ and the ``wet'' part by $T_t$.
It is well-known that the first variations of volume functional and area functional are given by
 \begin{equation}\nonumber
 \frac{d}{dt} \mbox{Vol}(\Omega_t)= \int _M \langle 
 Y, \nu \rangle dA
 \end{equation}
 and
 \begin{equation}\nonumber
 \frac{d}{dt} \mbox{Area}(\Sigma_t)=\int_{M}H\langle Y,\nu\rangle dA+\int_{\partial M} \langle Y,\mu\rangle d\sigma,
 \end{equation}
 where $dV_{ \mathbb{B}^{n+1}}$ is the volume element of $\mathbb{B}^{n+1}$ and $Y:=\frac{\partial}{\partial t} x_t(\cdot)\big|_{t=0}.$
 Moreover the variation of the area of $T_t$ is given by 
 \begin{equation}\nonumber
 \frac{d}{dt} \mbox{Aera} (T_t)= \int_{\partial M} \langle Y,\overline{\nu}\rangle d\sigma,
 \end{equation}
 For a proof, see \cite{RS} (See Section 4 Appendix there) for instance.
 Now, the variation of the enery functional $E$ is given by
 \begin{align} \label{2.4}
 \frac{d}{dt}E(\Sigma_t)=\int_{M}H\langle Y,\nu\rangle dA+\int_{\partial M} \langle Y,\mu-\cos\theta \,\overline{\nu}\rangle d\sigma.
 \end{align}

 \subsection{Key properties of  flow \eqref{MCF with angle} }
 From the Minkowski type formula in \cite{WX1} (see Proposition 3.2 and equation (3.4) there), we have the following two  important facts of \eqref{MCF with angle}.
 
 \begin{prop}\label{energy de}  Flow \eqref{MCF with angle} preserves the volume functional $\rm{Vol}(\Omega_t)$ and decreases  $E(M_t)$.
 \end{prop}

\begin{proof} 
 It is easy to see that this flow preserves the enclosed volume $\Omega_t$ of $x(M,t)$ in $\mathbb{B}^{n+1}$, since 
 \begin{equation} 
\begin{split}
 \frac{d}{dt} \mbox{Vol}(\Omega_t)&=\frac{d}{dt}\int_{[0,t]\times M} x^*dV_{\mathbb{B}^{n+1}} = \int_M f dA
 \\&=\int_{M}\Big[n\langle x,a\rangle+n\cos\theta \langle \nu,a\rangle -H\langle X_a,\nu\rangle\Big]dA=0,
\end{split}
 \end{equation}
 where the last equality is the new Minkowski identity proved in \cite{WX1}. With the above preparation and for the convenience of the reader, we point out that this formula
 follows from 
 \[f= {\rm div}\, (X_a^T+\cos \theta P^T_a)\quad  \text{ in }M, \quad  \langle X_a^T+\cos \theta P^T_a, \mu\rangle=0, \quad \text{ on } \partial M.
 \]
which, in turn, follows from 
equations \eqref{eq_a2}, \eqref{eq_a3} and  \eqref{tang killing metric}.


 	From \eqref{2.4} and Proposition 2.2, we have  that
 	\begin{equation}\label{energy deri}
 	\begin{split}
 	\frac{d}{dt} E(M_t)&:=\frac{d}{dt} \Big[ \mbox{Area}(M_t)-\cos\theta W(M_t)\Big]
 	\\& = \int_{M} H\big(n\langle x,a\rangle+n\cos\theta \langle \nu,a\rangle-H\langle X_a,\nu\rangle\big)dA
 	\\&=\Big[n \int_{M} \cos\theta H\langle \nu,a\rangle dA +\frac{n}{n-1}\int_{\partial M} \big(H \langle X_a^T,\mu\rangle -h(X_a^T,\mu)\big)d\sigma\Big]
 	\\&\quad  +
 	\int_{M} {\big( \frac{2n}{n-1} \sigma_2(\kappa)-|H|^2\big)} \langle X_a,\nu\rangle dA
 	\\&:=\text{S}_1+\text{S}_2.
 	\end{split}
 	\end{equation}
 	For the term $\text{S}_2$, we claim that $\text{S}_2\leq 0$. In fact, this follows from facts that $\langle X_a,\nu\rangle> 0$ in $M$ and the following well-known fact 
 	\begin{equation}
 	\begin{split}
 	\frac{2n}{n-1}\sigma_2(\kappa)-H^2&=\frac{1}{n-1}\big[2\sigma_2(\kappa)-(n-1)\sum_{i=1}^{n} \kappa^2_{i}\big]\\&=-\frac{1}{n-1}\sum_{1\leq i<j\leq n} (\kappa_{i}-\kappa_{j})^2 \leq 0.
 	\end{split}
 	\end{equation}
 	For the term S$_1$, from equations \eqref{tang killing metric} and \eqref{tang killing sec F.F.}, we have
 	\begin{equation}\nonumber
 	\begin{split}
 	h(X_a^T,\mu)=-\cos\theta h(\mu,\mu)\langle a,\overline{\nu}\rangle,\quad \langle X_a^T,\mu\rangle =\langle X_a,\mu\rangle = -\cos\theta \langle a,
 	\overline{\nu}\rangle.
 	\end{split}
 	\end{equation}
 	Combining with Proposition \ref{surface and bdry relation with H} and the fact that  $\theta\equiv const$, we have 
 	\begin{eqnarray*}
 		\frac{\text{S}_1}{n}&=&\int_{M} \cos\theta H\langle \nu,a\rangle dA+\frac{1}{n-1}\int_{\partial M} \big(H \langle X_a^T,\mu\rangle-h(X_a^T,\mu)\big)d\sigma
 		\\&= &  \int_{M} \cos\theta H\langle \nu,a\rangle dA-\frac{\cos\theta}{n-1}\int_{\partial M}  \big[H-h(\mu,\mu)\big]\langle a,\overline{\nu}\rangle d\sigma
 		=0.
 	\end{eqnarray*}
 	Therefore, we obtain
 	\begin{equation*}
 	\begin{split}
 	\frac{d}{dt} E(M_t)&:=\frac{d}{dt}\Big[ \mbox{Area}(M_t)-\cos\theta W(M_t)\Big]
 	=\text{S}_1+\text{S}_2\leq 0.
 	\end{split}
 	\end{equation*}
 	Hence we complete the proof.
 \end{proof}

 \section{A scalar equation}
 In this section we will reduce flow \eqref{MCF with angle} to a scalar flow, provided the initial hypersurface is  star-shaped.
 
 \subsection{Basic facts}
 In this subsection, we first recall some basic facts and identities for the relevant geometric quantities of a smooth 
 star-shaped hypersurface  $X:M\to \Sigma\subset \mathbb{R}^{n+1}_+$ 
 with respect to the origin. 
 If $\Sigma$ is star-shaped with respect to the origin, then the position vector $X$ of $\Sigma$ can be written
 as in \begin{equation*}
 X:=e^{u(x)}x=\rho(x)x \quad x\in\Omega\subset\mathbb{S}^n_+,
 \end{equation*} where $u\in C^2(\Omega)\cap C^0(\overline{\Omega})$ and $\rho:=e^u$.
 
 Let $\{e_i\}_{i=1}^n$ be the local frame field on $\mathbb{S}^n_+$ with the round metric $\sigma$, and denote $\nabla$ and $D$ 
  the gradient on $\mathbb{S}^n_+$ and $\mathbb{R}^{n+1}_+$ respectively. 
 Then in terms of $\rho$ the metric $g_{ij}$ is  given by
 \begin{equation*}
 g_{ij}=\langle D_{e_i}X,D_{e_j}X\rangle=e^{2u}\big(\sigma_{ij}+u_iu_j\big)=\rho^2\sigma_{ij}+\rho_i\rho_j,
 \end{equation*}
 where $\langle \cdot,\cdot\rangle$ denotes the standard inner product in $\mathbb{R}^{n+1}_+$,  $\sigma_{ij}:=\langle e_i,e_j\rangle$ and
 $\rho_i:=\nabla_{e_i}\rho$, $ \rho_{ij}:=\nabla_{e_i}\nabla_{e_j}\rho$. The inverse of metric $g$ is
 \begin{equation*}
 g^{ij}=e^{-2u}\big(\sigma^{ij}-\frac{u^iu^j}{1+|\nabla u|^2}\big)=\rho^{-2}\big( \sigma^{ij}-\frac{\rho^i\rho^j}{\rho^2+|\nabla \rho|^2}\big),
 \end{equation*}
 where $\sigma^{ij}$ denotes the inverse of $\sigma_{ij}$ and $u^i:=\sigma^{ik}u_k$.
 The unit outer normal vector field to  $\Sigma$ 
 in $\mathbb{R}^{n+1}_+$ is given by
 \begin{equation*}
 \nu(X(x))=\frac{x-\nabla u(x)}{\sqrt{1+|\nabla u|^2}}=\frac{x\rho-\nabla \rho(x)}{\sqrt{\rho^2+|\nabla \rho|^2}}.
 \end{equation*}
 Note that $\langle \nu,X\rangle=\frac{\rho^2}{\sqrt{\rho^2+|\nabla \rho|^2}}=\frac{e^u}{\sqrt{1+|\nabla u|^2}} >0$ which means that $\nu$ satisfies the choice of orientation on a radial graph.
 The second fundamental form of $X$ is
 \begin{equation*}
 h_{ij}=-\langle D_{e_i}D_{e_j}X,\nu\rangle=e^u\frac{\sigma_{ij}+u_iu_j-u_{ij}}{\sqrt{1+|\nabla u|^2}}=-\frac{\rho\rho_{ij}-\rho^2\sigma_{ij}-2\rho_i\rho_j}{\sqrt{\rho^2+|\nabla \rho|^2}},
 \end{equation*} 
 and the mean curvature is given by 
 \begin{equation*}
 \begin{split}
 H&:=\sum_{i,j=1}^{n}g^{ij}h_{ij}= \frac{e^{-u}}{\sqrt{1+|\nabla u|^2}} \Big( n-\Delta u+\sum_{i,j=1}^{n}\frac{u_{ij}u^iu^j}{1+|\nabla u|^2}\Big)
 \\&=-e^{-u}\mbox{div}_{_{\mathbb{S}^n_+}} \Big(\frac{\nabla u}{\sqrt{1+|\nabla u|^2}}\Big)+\frac{ne^{-u}}{\sqrt{1+|\nabla u|^2}}
 \\&=-\frac{1}{\rho}\mbox{div}_{_{\mathbb{S}^n_+}}\big(\frac{\nabla \rho}{\sqrt{\rho^2+|\nabla \rho|^2}}\big)+ \frac{n}{\sqrt{\rho^2+|\nabla \rho|^2}}
 \\&=\frac{n}{\rho v}-\frac{1}{\rho v}\sum_{i,j=1}^{n}\big(\sigma^{ij}-\frac{\rho^i\rho^j}{v^2}\big)\rho_{ij},
 \end{split}
 \end{equation*}
 where $v:=\sqrt{1+|\nabla u|^2}$ and $\mbox{div}_{_{\mathbb{S}^n_+}}$ is the divergence operator with respect to the canonical metric $\sigma$ on $\mathbb{S}^n_+$.
 
 Using the same method in \cite{Ge}, we assume that  flow equation \eqref{MCF with angle} is satisfied by a family of  the radial graphs over $\mathbb{S}^n_+$,
 that is, $x(\xi,t):=X(\xi,t)\rho(X(\xi,t),t)$ with $X\in\mathbb{S}^n_+$.  Then we have
 \begin{equation}\label{scalar equation}
 \begin{split}
 f&= \langle \frac{\partial x}{\partial t},\nu\rangle
 \\&= \langle \frac{\partial X}{\partial t}\rho+X\cdot \big(\nabla \rho\cdot \partial_tX\big)+X\partial_t \rho,\frac{X\rho-\nabla\rho}{\sqrt{|\nabla \rho|^2+\rho^2}}\rangle
 \\&=\frac{\partial \rho}{\partial t}\cdot \frac{\rho}{\sqrt{|\nabla \rho|^2+\rho^2}}=\frac{1}{\sqrt{1+|\nabla u|^2}}\frac{\partial u}{\partial t}.
 \end{split}
 \end{equation}
 
 \subsection{A conformal transformation}
 We use the following coordinate transformation $\varphi$ as in \cite{WX2} to transform the unit ball into the half space
 \begin{equation*}
 \begin{split}
 \varphi: \quad & \mathbb{B}^{n+1}\longrightarrow \mathbb{R}^{n+1}_+
 \\& (x,x_{n+1}) \longmapsto \frac{2x+(1-|x|^2-x_{n+1}^2)e_{n+1}}{|x|^2+(x_{n+1}-1)^2}:=(y,y_{n+1}),
 \end{split}
 \end{equation*}
 where $x:=(x_1,\ldots,x_n)\in\mathbb{R}^n$.
 Equivalently we have
 \begin{equation*}
 \begin{array}{rcll}
 x_i &=&\ds\vs \frac{2y_i}{|y|^2+(y_{n+1}+1)^2}, &\quad 1\leq i\leq n,\\
 x_{n+1}&=& \ds \frac{|y|^2+y_{n+1}^2-1}{|y|^2+(1+y_{n+1})^2}.
 \end{array}
 \end{equation*} 
 Moreover, $\varphi$ maps $\mathbb{S}^n=\partial\mathbb{B}^{n+1}$ to $\partial \mathbb{R}^{n+1}_+:=\{(y,y_{n+1})\in\mathbb{R}^{n+1}:y_{n+1}=0\}$.  By a direct computation, one gets
 \begin{equation*}
 \varphi^*(\delta_{\mathbb{R}^{n+1}_+})=\frac{4}{\big(|x|^2+(x_{n+1}-1)^2\big)^2} \delta_{\mathbb{B}^{n+1}},
 \end{equation*}
 which means that $\varphi$ is a conformal transformation from $\big(\mathbb{B}^{n+1},\delta_{\mathbb{B}^{n+1}}\big)$ to $\big(\mathbb{R}^{n+1}_+,\delta_{\mathbb{R}^{n+1}_+}\big) $. (Another view to see this fact is that it comes from the M\"{o}bius transformation
 $ M(z):=\frac{1-iz}{z-i},
 $ and rotational symmetry with $z:=|x|+x_{n+1} i$.)
 
 We define $X_{n+1}$ to the
conformal vector field $X_a$ with $a=-E_{n+1}$, that is,
 $X_{n+1}:=-\langle \tilde{x},E_{n+1}\rangle \tilde{x}+\frac{|\tilde{x}|^2+1}{2}E_{n+1}$, where $E_{n+1}$ is the  standard $(n+1)$-th component vector field in $\mathbb{B}^{n+1}$ and $\tilde{x}:=(x,x_{n+1})\in\mathbb{B}^{n+1}$.
 One can directly compute to find that $$ {\varphi_*(X_{n+1})=(y,y_{n+1}):=\tilde{y}}\quad \text{in}\quad \mathbb{R}^{n+1}_+.$$
 For a   hypersurface $\Sigma\subset\overline{\mathbb{B}}^{n+1}$ with capillary boundary $\partial\Sigma\subset\mathbb{S}^n$, we have
 \begin{equation*}
 \frac{4}{\big[|x|^2+(x_{n+1}-1)^2\big]^2} \langle X_{n+1},\nu\rangle=\langle \varphi_*(X_{n+1}),\varphi_*(\nu)\rangle=|\varphi_*(\nu)| \langle \tilde{y},\tilde{\nu}\rangle,
 \end{equation*}
 where $|\varphi_*(\nu)|= \frac{|y|^2+(y_{n+1}+1)^2}{2}$ and $\tilde{\nu}:=\frac{\varphi_*(\nu) }{|\varphi_*(\nu)|}$.
 Hence the hypersurface $\varphi(\Sigma)$ is star-shaped in $\mathbb{R}^{n+1}_+$ with respect to the origin, i.e.,  $\langle \tilde y, \tilde \nu\rangle>0$  if and only if $\langle X_{n+1},\nu\rangle > 0$ holds on $\Sigma$.
 Therefore, under the transformation  flow \eqref{MCF with angle}  is equivalent to
 \begin{equation}\label{MMCF with tangential variation after conformal tran}
 \begin{array}{rcll}
 \partial_t \tilde{y}&=&\varphi_*(\partial_t \tilde{x})=\big(\tilde{f}\cdot|\varphi_*(\nu)|\big)\tilde{\nu} 
 & \hbox{ in }\varphi(\Sigma)\times[0,T), \\
 \langle \tilde{\nu},\tilde{N} \rangle &=&\cos\theta\
 & \hbox{ on }\varphi(\partial\Sigma)\times [0,T),\\
 \tilde{y}(0) & =&\varphi(\tilde{x}(0))=\varphi(\tilde{x}_0):=\tilde{y}_0 \quad &   \text{ on} \quad \varphi(\overline{\Sigma})\times\{0\}, 
 
 \end{array}
 \end{equation}
 where $\tilde{f}:=f\circ\varphi^{-1}$,  $\tilde{N}:=\frac{\partial}{\partial y_{n+1}}$ is the inner normal vector field of $\varphi(\partial\Sigma)\subset \mathbb{R}^n\times \{0\}$ in $\mathbb{R}^{n+1}_+$.
 
 Now in $\mathbb{R}^{n+1}_+$, we use the polar coordinate $(\rho,\beta,\xi)\in [0,+\infty)\times [0,\frac{\pi}{2}]\times \mathbb{S}^{n-1}$ as in \cite{WX2}, where $\xi$ is the spherical coordinate in $\mathbb{S}^{n-1}$ and 
 \begin{equation*}
 \begin{cases}
 \rho^2=|y|^2+y_{n+1}^2,\\ y_{n+1}=\rho\cos\beta, |y|=\rho \sin\beta.
 \end{cases}
 \end{equation*}
 Then it implies that the standard Euclidean metric in $\mathbb{R}^{n+1}_+$ has the expression 
 \begin{equation}
 \begin{split}
 \delta_{\mathbb{R}^{n+1}_+} &=|d\tilde{y}|^2 =d\rho^2+\rho^2g_{_{\mathbb{S}^n_+}}
 \\&= d\rho^2+\rho^2(d\beta^2+\sin^2\beta g_{_{\mathbb{S}^{n-1}}}),
 \end{split}
 \end{equation}
 where $g_{_{\mathbb{S}^{n-1}}}$ is the standard spherical metric on $\mathbb{S}^{n-1}$. Since $\big(\mathbb{B}^{n+1},\delta_{\mathbb{{B}}^{n+1}}\big)$ and $\big(\mathbb{R}^{n+1}_+,(\varphi^{-1})^*(\delta_{\mathbb{R}^{n+1}_+})\big)$ are isometric, a proper embedding $\Sigma=\tilde{x}(M)$ in $\big(\mathbb{B}^{n+1},\delta_{\mathbb{{B}}^{n+1}}\big)$ can be identified as $\widetilde{\Sigma}$ in $\big( \mathbb{R}^{n+1}_+,(\varphi^{-1})^*(\delta_{\mathbb{R}^{n+1}_+})\big)$.
  
  For a star-shaped hypersurface $\widetilde{\Sigma}:=\tilde{y}(M)$ in $\big(\mathbb{R}^{n+1}_+,(\varphi^{-1})^*(\delta_{\mathbb{R}^{n+1}_+})\big)$, 
 where $\tilde{y}:=\varphi\circ\tilde{x}$, we can write it as
 \begin{align*}
 \tilde{y}=\rho (z)z=\rho(\beta,\xi)z,\quad z:=(\beta,\xi)\in\overline{\mathbb{S}}^n_+.
 \end{align*}
 In polar coordinates, a direct computation implies that
 \begin{equation*}
 \frac{\partial}{\partial y_{n+1}}=\frac{\partial \rho}{\partial y_{n+1}} \partial_{\rho}+\frac{\partial \beta}{\partial y_{n+1}} 
 \partial_{\beta}=\cos\beta \partial_{\rho}-\frac{\sin\beta}{\rho}\partial_{\beta}.
 \end{equation*}
It  gives us  
 \begin{equation*}
 \begin{split}
 \sum_{i=1}^{n}y_i\partial_{y_i}=\rho\partial_{\rho}-\rho \cos\beta\big(\cos\beta \partial_{\rho}-\frac{\sin\beta}{\rho}\partial_{\beta}\big)
 =\rho\sin^2\beta \partial_{\rho}+\frac{\sin 2\beta}{2}\partial_{\beta}.
 \end{split}
 \end{equation*}
 From now on, we always set $a:=-E_{n+1}$. We have 
 \begin{equation*}
 \begin{split}
 -\varphi_*(a)&=\sum_{i=1}^{n}\frac{\partial y_{i}}{\partial x_{n+1}}\frac{\partial}{\partial y_i}+\frac{\partial y_{n+1}}{\partial x_{n+1}}\frac{\partial}{\partial y_{n+1}}
 \\&=(1+y_{n+1})\sum_{i=1}^n y_i \frac{\partial}{\partial y_i}+\frac{(1+y_{n+1})^2-|y|^2}{2}\frac{\partial}{\partial y_{n+1}} \\&=y_{n+1}\sum_{i=1}^{n}y_i\partial_{y_i}+\frac{1+y_{n+1}^2-|y|^2}{2}\partial_{y_{n+1}}+\sum_{\alpha=1}^{n+1}y_{\alpha}\partial_{y_{\alpha}}
 \\&=\rho\cos\beta\big[\rho\sin^2\beta \partial_{\rho}+\frac{\sin 2\beta}{2}\partial_{\beta}\big]+\frac{1+\rho^2\cos 2\beta}{2}\big(\cos\beta \partial_{\rho}-\frac{\sin\beta}{\rho}\partial_{\beta}\big)+\rho\partial_{\rho}
 \\&=\frac{\rho^2\cos\beta+2\rho+\cos\beta}{2}\partial_{\rho}+\frac{(\rho^2-1)\sin\beta}{2\rho}\partial_{\beta}.
 \end{split}
 \end{equation*}
 Set $w:=\log \frac{2}{|y|^2+(y_{n+1}+1)^2}=\log \frac{2}{\rho^2+2\rho \cos\beta+1}$ and  $u:=\log \rho$.  From the discussion in Section 3.1, we know that $\tilde{\nu}=\frac{\partial_{\rho}-\rho^{-1}\nabla u}{v}$ is the unit outward normal vector field of $\widetilde{\Sigma}$ in $(\mathbb{R}^{n+1}_+, \delta_{\mathbb{R}^{n+1}_+})$. Then the capillary boundary condition gives us that
 \begin{equation*}
 \begin{split}
 -e^{-2w}\cos\theta&=e^{-2w}\delta_{\mathbb{B}^{n+1}}(\nu,\overline{N}\circ \tilde{x})
 \\&= \varphi^*\delta _{\mathbb{R}^{n+1}_+}(\nu,\overline{N}\circ \tilde{x})\\&= \delta _{\mathbb{R}^{n+1}_+}(\varphi_*(\nu),\varphi_*(\tilde{x}))
 \\&=\langle e^{-w}\tilde{\nu},\frac{\rho^2+1}{2\rho}\partial_{\beta}\rangle
 =e^{-2w}\langle \frac{\partial_{\rho}-\rho^{-1}\nabla u}{v},\frac{1}{\rho}\partial_{\beta}\rangle
 \\&=-e^{-2w}\frac{\nabla_{\partial_{\beta}}u}{v},
 \end{split}
 \end{equation*}
 It follows that
 \begin{align*}
 \nabla_{\partial_{\beta}}u=\cos\theta v\quad\text{on}\quad \partial\mathbb{S}^n_+.
 \end{align*}
 By a straightforward computation as above, under the conformal transformation $\varphi$ we have
 \begin{equation*}
 \begin{split}
 \langle \nu,a\rangle
 &=e^{2w}\delta _{\mathbb{R}^{n+1}_+}(\varphi_*(\nu),\varphi_*(a))
 \\&=e^w\langle \frac{\partial_{\rho}-\rho^{-1}\nabla  u}{v},\frac{\rho^2\cos\beta+2\rho+\cos\beta}{2}\partial_{\rho}+\frac{(\rho^2-1)\sin\beta}{2\rho}\partial_{\beta}\rangle
 \\&=\frac{e^w}{2v} \big(\rho^2\cos\beta+2\rho+\cos\beta\big)-\frac{e^w(\rho^2-1)\sin\beta}{2v} \nabla _{\partial_\beta}u 
 \end{split}
 \end{equation*}
 and
 \begin{equation*}
 \langle \tilde{x},a\rangle=\langle \varphi^{-1}(\tilde{y}),a\rangle=-x_{n+1}=-\frac{|y|^2+y_{n+1}^2-1}{|y|^2+(y_{n+1}+1)^2}=-\frac{\rho^2-1}{2}e^w.
 \end{equation*}
 Similarly, we have
 \begin{equation*}
 \begin{split}
 \langle X_a,\nu\rangle&=\frac{4}{\big[|y|^2+(y_{n+1}+1)^2\big]^2} \langle \varphi_*(X_a),\varphi_*(\nu)\rangle
 \\&=\frac{2}{|y|^2+(y_{n+1}+1)^2} \langle \tilde{y},\tilde{\nu}\rangle
 \\&=e^w\langle \rho\partial_{\rho},\frac{\partial_{\rho}-\rho^{-1}\nabla u}{v}\rangle
 \\&=e^w \frac{\rho}{v}.
 \end{split}
 \end{equation*}
 Note that $e^{-w}:=\frac{\rho^2+2\rho\cos\beta+1}{2}$.  It  then yields that
 \begin{equation*}
 \begin{split}
 D_{\tilde{\nu}}e^{-w}&=\langle 
 D e^{-w},\frac{\partial_{\rho}-\rho^{-1}\nabla u}{v}\rangle
 \\&=\langle (\rho+\cos\beta)\partial_{\rho}+\rho^{-1}\partial_{\beta} e^{-w}\cdot \rho^{-1}\partial_{\beta}, \frac{\partial_{\rho}-\rho^{-1}\nabla u}{v}\rangle
 \\&=\langle (\rho+\cos\beta)\partial_{\rho}-\sin\beta \rho^{-1}\partial_{\beta}, \frac{\partial_{\rho}-\rho^{-1}\nabla u}{v}\rangle
 \\&=\frac{\rho+\cos\beta+\sin\beta \nabla_{\partial_\beta} u}{v}.
 \end{split}
 \end{equation*}
 Applying the transformation law for the mean curvature under a conformal metric, we know that
 the mean curvature $\tilde{H}$ of $\tilde{\Sigma}$ in $\big( \mathbb{R}^{n+1}_+,(\varphi^{-1})^*(\delta_{\mathbb{R}^{n+1}_+})\big)$ is given by (see \cite{WX2}, equation (13) there)
 \begin{equation*}
 \begin{split}
 \tilde{H}&=e^{-w}\big(H_{\tilde{\nu}}+nD _{\tilde{\nu}} w\big)
 \\&=e^{-w}\Big[\frac{n}{\rho v}-\frac{1}{\rho v}\sum_{i,j=1}^{n}(\sigma^{ij}-\frac{u^iu^j}{v^2})u_{ij}\Big]-nD_{\tilde{\nu}}e^{-w}
 \\&=e^{-w}\Big[\frac{n}{\rho v}-\frac{1}{\rho v}\sum_{i,j=1}^{n}(\sigma^{ij}-\frac{u^iu^j}{v^2})u_{ij}\Big]-n \frac{\rho+\cos\beta+\sin\beta \nabla_{\partial_\beta} u}{v}
 \\&=-\Big[\frac{1}{\rho ve^w} \sum_{i,j=1}^{n}(\sigma^{ij}-\frac{u^iu^j}{v^2})u_{ij}+\frac{n\sin\beta \nabla_{\partial_\beta}u}{v}+\frac{n(\rho^2-1)}{2\rho v}\Big],
 \end{split}
 \end{equation*}
 where $H_{\tilde{\nu}}$ is the mean curvature with respect to $\tilde{\nu}$ of $\widetilde{\Sigma}$ in $(\mathbb{R}^{n+1}_+, \delta_{\mathbb{R}^{n+1}_+})$.
 From the discussion in Section 3.1, in particular, equation \eqref{scalar equation}, we know that the first equation in 
 \eqref{MMCF with tangential variation after conformal tran} is reduced to  the  following scalar equation
 \begin{equation}\label{salar equation of tilde}
 \frac{\partial_t \rho}{v}= \frac{\tilde{f}}{e^w},
 \end{equation}
 where \begin{equation*}
 \begin{split}
 \tilde{f}&:=n\langle x,a\rangle+n\cos\theta \langle \nu,a\rangle -\tilde{H}\langle X_a,\nu\rangle 
 \\&=-\frac{n}{2}(\rho^2-1)e^w  +\frac{n\cos\theta}{2}\frac{e^w}{v}\big(\rho^2\cos\beta+2\rho+\cos\beta\big)\\&\quad-\frac{n\cos\theta}{2}\frac{e^w}{v} {(\rho^2-1)\sin\beta} \nabla_{\partial_\beta} u  +\big[\frac{1}{\rho ve^w} (\sigma^{ij}-\frac{u^iu^j}{v^2})u_{ij}+\frac{n\sin\beta \nabla_{\partial_\beta} u}{v}\\&\quad+\frac{n(\rho^2-1)}{2\rho v}\big]\cdot \frac{\rho e^w}{v}.
 \end{split}
 \end{equation*}
It is easy to see that  equation \eqref{salar equation of tilde} is also equivalent to
 \begin{equation*}
 \begin{split}
 \partial_t u&=\frac{v}{\rho e^w}\tilde{f}=\frac{1}{\rho v e^w}\Big(\sigma^{ij}-\frac{u^iu^j}{v^2}\Big) u_{ij}+\Big[\frac{n\sin\beta u_{\beta}}{v}-\frac{n(\rho^2-1)}{2\rho}\frac{|\nabla  u|^2}{v}\\&\quad+\frac{n\cos\theta}{2\rho}\big(\rho^2\cos\beta+2\rho+\cos\beta\big)
 -\frac{\rho^2-1}{2\rho} n\cos\theta \sin\beta u_{\beta}\Big]
 \\&=\Big[\mbox{div}_{_{\mathbb{S}^n_+}}\big(\frac{\nabla u}{\rho v e^w}\big)-\frac{n+1}{v}\sigma\big(\nabla u,\nabla (\frac{1}{\rho e^w }) \big)
 \\&\quad-\frac{n\cos\theta}{2}\cdot\frac{\rho^2-1}{\rho}\sin\beta \sigma(\nabla u,\partial_{\beta})\Big]+\frac{n\cos\theta}{2}\cdot \frac{\rho^2\cos\beta+2\rho+\cos\beta}{\rho}
 \\&:=F(\nabla^2u,\nabla u, \rho,\beta).
 \end{split}
 \end{equation*}
 In summary, from the above discussion, flow    \eqref{MCF with angle} is equivalent to (up to a tangential diffeormphism) the following scalar parabolic equation  on $\mathbb{S}^n_+$ .
 \begin{equation}\label{MMCF with capillary u}
 \begin{array}{rcll} \ds\vs
 \frac{\partial  u}{\partial t} &=& \ds F(\nabla^2 u,\nabla u, \rho ,\beta) & \quad \text{in} \quad \mathbb{S}^n_+\times[0,T),\\
 \nabla_{\partial_{\beta}}u&=& \ds\vs \cos\theta \sqrt{1+|\nabla  u|^2}  &\quad \text{on} \quad \partial\mathbb{S}^n_+\times[0,T),\\
 u(\cdot,0)&=&u_0(\cdot) & \quad \text{on}\quad \mathbb{S}^n_+,
 \end{array}
 \end{equation}
 where $u_0=\log \rho_0$, $\rho_0$ is related to the initial hypersurface  $x_0(M)$ under the transformation $\varphi$ and $F$ is defined in the previous equation.

 \section{A priori estimates}
 The short-time existence of our flow is established by the standard PDE theory, since due to our assumption of  star-shaped, 
 \begin{align*}
 \langle X_a,\nu\rangle>0,
 \end{align*} for initial hypersurface, the flow is equivalent to  the scalar flow \eqref{MMCF with capillary u}.
 In this section, we will show the uniform height and gradient estimates for  equation \eqref{MMCF with capillary u}. 
 Then the longtime existence of the flow  follow immediately from the standard parabolic PDE theory. 
 
 In this section, we use the Einstein summation convention, i.e.,  if not stated otherwise, the repeated arabic indices $i,j,k$ should be summed from $1$ to $n$. 
 We also use the notations $u_{\beta}:=\sigma( \nabla u,\partial_{\beta})=\nabla_{\partial_\beta}u$ and $|\nabla u|^2:=\sigma(\nabla u,\nabla u)$ in this section.
 Recall that $\rho=e^u$ and $2e^{-w}={1+\rho^2+2\rho\cos\beta }$. 
 For the convenience, we introduce  the following notations
 \begin{align*}
 F^{ij}&:=\frac{\partial F(r,p,\rho,\beta)}{\partial r_{ij}}\bigg|_{r=\nabla^2 u,p=\nabla u}
 =\frac{1}{\rho ve^w} (\sigma^{ij}-\frac{u^iu^j}{v^2}), 
 \end{align*}
 \begin{align*}
 F_{p_i}&:=\frac{\partial F(r,p,\rho,\beta)}{\partial p_i}\bigg|_{r=\nabla^2 u,p=\nabla u}
 \\&=-\frac{u_i}{\rho e^w v^3}a^{kl}u_{kl}-\frac{2}{\rho v^3 e^w}a^{il}u_{kl}u_k
 +\frac{n\sin\beta \sigma( \partial_{\beta},e_i)}{v}-n\sin\beta u_{\beta}\frac{u_i}{v^3} \\&\quad-\frac{n(\rho^2-1)}{2\rho} \big(\frac{2}{v}-\frac{|\nabla u|^2}{v^3}\big)u_i-\frac{\rho^2-1}{2\rho} n\cos\theta\cos\beta \sigma( \partial_{\beta},e_i),
 \end{align*}
 \begin{align*}
 F_{\rho}&:=\frac{\partial F(r,p,\rho,\beta)}{\partial \rho}\bigg|_{r=\nabla^2 u,p=\nabla u}\\&=\frac{1}{2v}a^{ij}u_{ij}\big(1-\frac{1}{\rho^2}\big)-\frac{n}{2} \big(1+\frac{1}{\rho^2}\big) \frac{|\nabla u|^2}{v}-\frac{n}{2}\big(1+\frac{1}{\rho^2}\big) \cos\theta \sin\beta u_{\beta}\\&\quad +\frac{n\cos\theta}{2}\big(1-\frac{1}{\rho^2}\big)\cos\beta,
 \end{align*}
 \begin{align*}
 F_{\beta}&:=\frac{\partial F(r,p,\rho,\beta)}{\partial \beta}\bigg|_{r=\nabla^2 u,p=\nabla u}\\&=-\frac{1}{v}a^{ij}u_{ij}\sin\beta+\frac{n\cos\beta}{v}u_{\beta}-\cos\theta\frac{\rho^2-1}{2\rho}n\cos\beta  u_{\beta}-\frac{n\cos\theta}{2\rho} \big({\rho^2}+1\big)\sin\beta
 \end{align*}
 and
  \begin{align*}
 	a^{ij}=\sigma^{ij}-\frac{u^iu^j}{v^2},\quad\mathcal{F}:=\sum_{i=1}^n F^{ii}.
 \end{align*}
 \begin{rem}[Spherical caps]\label{static model} For any given constant $\theta \in (0, \pi)$,
 	define \begin{equation*}
 	\mathcal{C}_{r,\theta}:=\{z\in\mathbb{B}^{n+1} |z+\sqrt{r^2+2r\cos\theta+1}E_{n+1}| \leq  r\},\quad r\in(0,\infty).
 	\end{equation*}
 	It is easy to check that  $\partial \mathcal{C}_{r,\theta}$ is a static solution to the flow \eqref{MCF with angle}, that is,
 	\begin{align*}
  n\langle x,E_{n+1}\rangle+ n\cos\theta \langle \nu,E_{n+1}\rangle +H\langle X_{n+1},\nu\rangle=0
 	\end{align*}
 	and meets the support ${\mathbb S}^n=\partial \mathbb{B}^{n+1}$ at the contact angle $\theta$. Such a spherical cap is certainly star-shaped and determines  a corresponding radial function $\psi$, which is a stationary solution of
	flow \eqref{MMCF with capillary u}.
 \end{rem}
 
 Now we are ready to show that the radial function $u$ has the following $C^0$ estimate.
 \begin{prop} 
 	Assume that  the initial star-shaped hypersurface $x_0(M)$ satisfies\begin{equation*}
 	x_0(M)\subset \mathcal{C}_{R_2,\theta}\setminus \mathcal{C}_{R_1,\theta},
 	\end{equation*}
 	for some $R_2>R_1>0$, 
 	where $\mathcal{C}_{R,\theta}$ is defined in Remark \ref{static model}. Then this property is preserved along  flow \eqref{MCF with angle}. In particular, if $u(x,t)$ solves the initial boundary value problem \eqref{MMCF with capillary u} on interval $[0,+\infty)$, then for any $T>0$, 
 	\begin{equation*}
 	\|u\|_{C^0(\mathbb{S}^n_+\times[0,T])}\leq C,
 	\end{equation*}
 	where $C$ is a constant depends only on the initial value and their covariant derivatives with respect to the round metric $\sigma$ on $\mathbb{S}^n_+$.
 \end{prop}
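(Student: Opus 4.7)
The strategy is to treat the spherical caps $\{\mathcal{C}_{R,\theta}\}_{R>0}$ as barriers and invoke the parabolic maximum principle on the scalar reformulation \eqref{MMCF with capillary u}. The starting point is the fact that every cap $\mathcal{C}_{R,\theta}$ is a \emph{stationary solution} of the flow \eqref{MCF with angle}, i.e., the speed $f$ defined after \eqref{MCF with angle} vanishes identically on $\mathcal{C}_{R,\theta}$. This can be verified by a direct pointwise computation on the umbilical round cap: parametrising $x = c_0 + R\omega$ with $c_0 = \sqrt{R^2+2R\cos\theta+1}\,E_{n+1}$, the quantities $\langle x,a\rangle$, $\langle \nu,a\rangle$, $H=n/R$, and $\langle X_a,\nu\rangle$ can be written out explicitly, and the Minkowski-type algebraic identity underlying the definition of $f$ (cf.~\cite{WX1}) yields $f\equiv 0$ on the cap. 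Under the conformal reduction of Section~3.2, the cap $\mathcal{C}_{R,\theta}$ therefore corresponds to a time-independent, rotationally symmetric profile $u_R(\beta)$ on $\overline{\mathbb{S}}^n_+$ solving \eqref{MMCF with capillary u} with $\partial_t u_R\equiv 0$.

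The hypothesis $x_0(M)\subset \mathcal{C}_{R_2,\theta}\setminus \mathcal{C}_{R_1,\theta}$ then translates, via the foliation of $\mathbb{R}^{n+1}_+$ by the images of the caps, to the pointwise sandwich $u_{R_1}\leq u(\cdot,0)\leq u_{R_2}$ on $\overline{\mathbb{S}}^n_+$. I will show the upper bound $u(\cdot,t)\leq u_{R_2}$ persists; the lower bound is symmetric. Put $w:=u-u_{R_2}$. Since $u$ and $u_{R_2}$ both solve $\partial_t u = F(\nabla^2 u,\nabla u,\rho,\beta)$ with $F$ uniformly parabolic (the matrix $a^{ij}=\sigma^{ij}-u^iu^j/v^2$ is positive definite on star-shaped graphs), subtracting and differentiating $F$ along the line segment between the two jets yields a linear parabolic equation $\partial_t w = Lw$ in $\mathbb{S}^n_+\times(0,T)$. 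Similarly, both functions satisfy the nonlinear oblique condition $B(\nabla u):= u_\beta-\cos\theta\sqrt{1+|\nabla u|^2}=0$ on $\partial\mathbb{S}^n_+$; linearising $B$ and using $u_\beta=\cos\theta\,v$ there, the coefficient of $w_\beta$ in the resulting linear boundary condition computes to
\begin{equation*}
1-\cos\theta\cdot\frac{u_\beta}{v}=1-\cos^2\theta=\sin^2\theta>0,
\end{equation*}
so the boundary operator is genuinely oblique for every admissible $\theta\in(0,\pi)$.

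With these two ingredients, the classical maximum principle for linear parabolic equations with oblique data concludes the argument. If $w$ attains a positive maximum at an interior point $(x_0,t_0)$, the interior maximum principle (after an $e^{-Kt}$ shift to absorb any zero-order term) gives a contradiction; if instead the maximum is attained on $\partial\mathbb{S}^n_+$ at some $t_0>0$, the Hopf boundary lemma in the oblique direction forces the oblique derivative of $w$ to be strictly positive there, contradicting the linearised boundary equation $\alpha^i w_i=0$ with $\alpha^\beta=\sin^2\theta>0$. Hence $w\leq 0$, i.e., $u\leq u_{R_2}$, on $[0,T)$; combining with the symmetric lower comparison against $u_{R_1}$ gives the desired uniform $C^0$ bound, with constant depending only on $R_1$, $R_2$, and the cap profiles.

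The main obstacle is the very first step—the pointwise vanishing of $f$ on each cap—since Propositions~\ref{sigma 2 cap} and \ref{surface and bdry relation with H} provide only integral Minkowski identities, not pointwise ones. Once the pointwise stationarity is established using the explicit parametrisation of the umbilical cap and the algebraic form of $X_a$, the remainder is a standard maximum-principle argument for a quasilinear parabolic equation with oblique Neumann-type boundary data, made possible by the strict positivity $\sin^2\theta>0$ of the obliqueness coefficient for every contact angle in $(0,\pi)$.
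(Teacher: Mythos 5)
Your proposal is correct and follows essentially the same route as the paper: both compare $u$ with the static radial profile $\psi$ of a bounding cap, write the difference as a linear parabolic equation via the fundamental theorem of calculus, absorb the zero-order term by an $e^{\lambda t}$ normalization, and derive a contradiction at a boundary maximum from the Hopf lemma together with the strict obliqueness of the linearized capillary condition (the paper phrases this obliqueness through the strict monotonicity of $\tau\mapsto \tau/\sqrt{1+s^2+\tau^2}$ rather than through the coefficient $\partial B/\partial p_\beta$, but the two formulations are equivalent). Two small remarks: your observation that the pointwise stationarity $f\equiv 0$ on the caps needs a direct verification is well-taken, as the paper asserts it without proof; and the obliqueness coefficient $\alpha^\beta$ of the linearized boundary operator is the segment average $\int_0^1\bigl(1-\cos\theta\,p_\beta(s)/\sqrt{1+|p(s)|^2}\bigr)\,ds$, which is bounded below by $1-|\cos\theta|>0$ but equals $\sin^2\theta$ only at the endpoints where the exact boundary condition holds—still strictly positive, so the argument is unaffected.
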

 
 \begin{proof} For any $T>0$, we want to get the $C^0$ estimate of $u$ in $\mathbb{S}^n_+\times [0,T]$.
 	Assume that $\psi$ is the radial function of the corresponding upper spherical cap with respect to $\partial \mathcal{C}_{R_2,\theta}\cap \mathbb{B}^{n+1}$ after the conformal transformation $\varphi$. Since $\psi$ is a static solution to 
	 flow \eqref{MMCF with capillary u}, 
	 we know that
 	\begin{equation*}
 	\begin{split}
 	\partial_t(u-\psi)&=F(\nabla^2 u,\nabla u,e^u,\beta)-F(\nabla^2 \psi,\nabla \psi,e^{\psi},\beta)
 	\\&= A^{ij}\nabla_{ij}(u-\psi)+ b^j\cdot (u-\psi)_j+c\cdot(u-\psi),
 	\end{split}
 	\end{equation*}
 	where $A^{ij}:=\int_0^1 F^{ij}\big(\nabla^2(su+(1-s)\psi),\nabla (su+(1-s)\psi),su+(1-s)\psi,\beta\big)ds$, $b^j:=\int_0^1 F_{p_j}\big(\nabla^2(su+(1-s)\psi),\nabla (su+(1-s)\psi),su+(1-s)\psi,\beta\big)ds$, and $c:=\int_0^1 F_\rho\big(\nabla^2(su+(1-s)\psi),\nabla (su+(1-s)\psi),su+(1-s)\psi,\beta\big) e^{su+(1-s)\psi}ds$.
 	Denote $\lambda :=-\sup\limits_{\mathbb{S}^n_+\times [0,T]} |c|$.  Applying the maximum principle, we know that $e^{\lambda t}(u-\psi)$ attains its nonnegative maximum value at the parabolic boundary, say $(x_0,t_0)$. That is,
 	\begin{equation*}
 	e^{\lambda t}(	u(x,t)-\psi(x))\leq \sup_{\partial \mathbb{S}^n_+\times [0,T)\cup \mathbb{S}^n_+\times\{0\}}\{0,e^{\lambda t} (u(x,t)-\psi(x))\}.
 	\end{equation*}
 	with either $x_0\in \partial \mathbb{S}^n_+$ or $t_0=0$.
 	If $x_0\in\partial\mathbb{S}^n_+$, from the Hopf lemma, we have
 	\begin{equation*}
 	\nabla'(u-\psi)(x_0,t_0)=0, \nabla_n u(x_0,t_0)<\nabla_n \psi(x_0,t_0),
 	\end{equation*}
 	that is, $|\nabla' u|=|\nabla' \psi|:=s$ and $\nabla_nu<\nabla_n\psi$ at $(x_0,t_0)$. Here we denote $\nabla'$ and $\nabla_n$ as the tangential and normal part of $\nabla$ on $\partial \mathbb{S}^n_+$, $e_n=-\partial_{\beta}$ is the inner normal vector field on $\partial\mathbb{S}^n_+$.
 	 From the boundary condition in \eqref{MMCF with capillary u} we have
 	\begin{equation}\nonumber
 	\frac{\nabla_nu}{\sqrt{1+s^2+|\nabla_{n}u|^2}}=-\cos\theta= \frac{\nabla_n \psi}{\sqrt{1+s^2+|\nabla_{n}\psi|^2}},
 	\end{equation}
 	 a contradiction to  the fact that function $\frac{\tau}{\sqrt{1+s^2+\tau^2}}$ is strictly increasing with respect to $\tau\in \mathbb{R}$ and $\nabla_nu<\nabla_n\psi$ at $(x_0,t_0)$.
 	Hence we have $t_0=0$, which follows that
 	\begin{equation*}
 	e^{\lambda t}(u(x,t)-\psi(x))\leq  u_0(x_0)-\psi(x_0)\leq 0, \quad \text{in}\quad (x,t)\in \mathbb{S}^n_+\times [0,T],
 	\end{equation*}
 	that is
 	\begin{equation*}
 	u(x,t)\leq \psi(x), \quad \text{in}\quad  (x,t)\in \mathbb{S}^n_+\times [0,T].
 	\end{equation*} Hence we obtain the desired upper bound of $u$.
 	Similarly, we can get the desired lower bound of $u$. After the conformal transformation we finish the proof of the Proposition.
 \end{proof}
 
 In order to obtain the gradient estimate, we need to employ the distance function $d(x):=dist_{\sigma}(x,\partial \mathbb{S}^n_+)$. It is well-known that $d$ is well-defined and smooth for $x$ near $\partial \mathbb{S}^n_+$ and $\nabla d=-\partial_{\beta}$ on $\partial \mathbb{S}^n_+$, where $\partial_{\beta}$ is the unit  {outer} normal vector field on $\partial \mathbb{S}^n_+$. In the following, we extend $d$ to be a smooth function on $\overline{\mathbb{S}}^n_+$ and satisfying that
 \begin{equation*}
 d\geq 0,\quad |\nabla d|\leq 1\quad \text{in}\quad \overline{\mathbb{S}}^n_+.
 \end{equation*}
 We will use  $O(s)$ to denote terms that are bounded by $Cs$ for  a constant $C>0$, which  depends only on the $C^0$ norm of $u$. 
 Our choice of test functions are motivated from \cite{GMWW}, \cite{Guan} and \cite{Ko}.
 Now we show the uniform gradient estimate. This is the key step of this paper.
 \begin{prop}\label{gradient estimate uniformly}
 	If $u(x,t)$ solves the initial boundary value problem \eqref{MMCF with capillary u} on the interval $[0, T^*)$ $(T^* \in (0, \infty])$ with $|\cos\theta|<\frac{3n+1}{5n-1}$. 
	Then for any $(x,t)\in \mathbb{S}^n_+\times [0,T]$ $(T<T^*)$, we have
 	\begin{equation*}
 	|\nabla u|(x,t)\leq C,
 	\end{equation*}
 	where $C$ is a constant depends only on the initial values and the covariant derivatives with respect to round metric $\sigma$ on $\mathbb{S}^n_+$.
 \end{prop}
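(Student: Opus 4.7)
Write $v:=\sqrt{1+|\nabla u|^2}$ and seek a uniform upper bound on $v$. The plan is to apply the parabolic maximum principle to an auxiliary function of the form
\[
\Phi(x,t) \;:=\; \log v \;+\; \gamma(u) \;+\; A\,d(x)
\]
on $\overline{\mathbb{S}}^n_+\times[0,T]$, where $d(x)=\dist_\sigma(x,\partial\mathbb{S}^n_+)$ is the (extended) distance function, $A$ is a constant to be chosen, and $\gamma$ is a smooth function of the radial height $u$. The term $A\,d$ is a classical compensator for the nonlinear oblique Neumann condition, while $\gamma(u)$ is designed to absorb the lower-order drift contributions coming from $F_{p_i}$, $F_\rho$, $F_\beta$ listed above.

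\textbf{Boundary case.} Assume $\Phi$ attains its space-time maximum at $(x_0,t_0)\in\partial\mathbb{S}^n_+\times(0,T]$. Using $\nabla d=-\partial_\beta$ on $\partial\mathbb{S}^n_+$ together with $u_\beta=\cos\theta\, v$, I compute
\[
\nabla_{\partial_\beta}\Phi\big|_{\partial\mathbb{S}^n_+} \;=\; \frac{u^i u_{i\beta}}{v^2} \;+\; \gamma'(u)\cos\theta\, v \;-\; A .
\]
Differentiating the Neumann condition tangentially gives $u_{\alpha\beta}=(\cos\theta/\sin^2\theta)(u^\gamma u_{\gamma\alpha})/v$ for tangential $\alpha$, and the decomposition $|\nabla u|^2=\sin^{-2}\theta\,(v^2\sin^2\theta-1)$ on the boundary lets me bound $u^iu_{i\beta}/v^2$ by an $O(1)$ quantity plus a controlled multiple of $\gamma'(u)\cos\theta\,v$. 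Hence, choosing $A$ large enough depending only on $n$, $b_0$ and the $C^0$-bound from Proposition~3.1, I force $\nabla_{\partial_\beta}\Phi<0$ strictly on $\partial\mathbb{S}^n_+$, contradicting the Hopf lemma at a boundary maximum. Consequently, the maximum of $\Phi$ must lie at $t=0$ (where it is controlled by the initial data) or at an interior space-time point.

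\textbf{Interior case and the role of $|\cos\theta|<\frac{3n+1}{5n-1}$.} At an interior maximum one has $\partial_t\Phi\ge 0$, $\nabla\Phi=0$ and $F^{ij}\nabla_{ij}\Phi\le 0$. Differentiating the scalar equation \eqref{MMCF with capillary u} in $x$ and pairing with $\nabla u$ produces an evolution equation for $\log v$; commuting covariant derivatives on $\mathbb{S}^n_+$ contributes curvature terms that are harmless once $\|u\|_{C^0}$ is fixed. Substituting $\nabla\Phi=0$ to eliminate $\nabla v/v$ in favor of $\gamma'(u)\nabla u+A\nabla d$, and using the structural identity $a^{ij}=\sigma^{ij}-u^iu^j/v^2$, a favorable negative semidefinite term of the form $-F^{ij}\frac{(u^ku_{ki})(u^\ell u_{\ell j})}{v^4}$ emerges from $\nabla^2\log v$ combined with the conformal structure of $F^{ij}$. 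This is precisely the ``good term'' referred to in the introduction. Balancing the leading $v$-order terms, with $\gamma$ taken as a suitable affine (or exponential) function of $u$, yields a scalar inequality of the schematic form
\[
0 \;\le\; \bigl[(3n+1) - (5n-1)|\cos\theta|\bigr]\,v \;+\; C ,
\]
which forces $v\le C'$ whenever $|\cos\theta|\le b_0<(3n+1)/(5n-1)$.

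\textbf{Principal obstacle.} The main difficulty is the simultaneous algebraic balancing at the boundary and in the interior. The nonlinear term $u_\beta=\cos\theta\,v$ couples tangential and normal derivatives in a way that the compensator $A\,d$ and the profile $\gamma(u)$ must be chosen with matching constants so that Hopf closes at the boundary \emph{and} the interior coefficient of $v$ becomes strictly negative. Tracking the precise $v$-scaling of each contribution from $F_{p_i}$, $F_\rho$, $F_\beta$ (especially the cross terms of the form $(\rho^2-1)\cos\theta\sin\beta\,u_\beta$) while ensuring that no uncontrolled $v^2$-term survives is the most delicate step; the sharp threshold $\frac{3n+1}{5n-1}$ is exactly the critical value at which this balance is achievable with the ansatz above, and any improvement would require a genuinely different test function.
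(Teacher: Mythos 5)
Your proposed test function $\Phi=\log v+\gamma(u)+A\,d$ is genuinely different from the one in the paper, which takes
\[
\Phi=(1+Kd)\,v+\cos\theta\,\sigma(\nabla u,\nabla d),
\]
and the difference is fatal at the boundary. In the Hopf lemma step your outer-normal derivative contains
\[
\nabla_{\partial_\beta}\log v
=\frac{u^ku_{k\beta}}{v^2}
=\frac{\sum_{\alpha}u^\alpha u_{\alpha\beta}}{v^2}
+\frac{\cos\theta\,u_{\beta\beta}}{v},
\]
where I used $u_\beta=\cos\theta\,v$. Differentiating the Neumann condition tangentially, as you do, controls only the mixed second derivatives $u_{\alpha\beta}$; the pure normal second derivative $u_{\beta\beta}$ is left completely free, and it enters your boundary inequality with the coefficient $\cos\theta/v$, which is nonzero whenever $\theta\neq\pi/2$. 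Neither $\gamma(u)$ (which contributes only $\gamma'(u)u_\beta$, a first-order quantity) nor the additive compensator $A\,d$ (which contributes a constant $-A$) produces a counter-term proportional to $u_{\beta\beta}$. If $u_{\beta\beta}$ is large and of the favorable sign, your inequality $0\le\nabla_{\partial_\beta}\Phi$ holds with no contradiction, and the boundary case does not close. The paper's cross term $\cos\theta\,\sigma(\nabla u,\nabla d)$ is there precisely to generate a $+\cos\theta\,u_{nn}$ in $\nabla_n\Phi$ that cancels the $-\cos\theta\,u_{nn}$ coming from $\nabla_n v$ (using $u_n=-\cos\theta\,v$); this cancellation is what lets them combine tangential first-order conditions with the differentiated boundary condition to conclude $u_{n\alpha}=0$ and then derive a contradiction from the $Kv$ term. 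Your ansatz cannot reproduce this cancellation.

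The interior case in your sketch is also not sufficient as stated. The threshold $\frac{3n+1}{5n-1}$ arises in the paper from a very specific decomposition: after choosing coordinates with $\nabla u=u_1e_1$ and the tangential Hessian diagonal, one uses the first-order conditions $\nabla\Phi=0$ to express $u_{11}$ and the mixed entries $u_{1\alpha}$ as $O(1)$ plus a small multiple of $\sum_\alpha u_{\alpha\alpha}$ (cf.\ \eqref{u1 alpha}, \eqref{u_11}), so that the bad linear-in-$\sum u_{\alpha\alpha}$ terms from $\text{J}_{12},\text{J}_{22}$ can be absorbed by a portion of the negative quadratic $-\frac{1+Kd}{\rho e^w v^2}\sum u_{\alpha\alpha}^2$ arising from $\text{J}_3$. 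The resulting algebraic balance
\[
(n-1)(1+b_0)-4(1-\varepsilon)(1-b_0)n<0
\]
is exactly what produces the range $|\cos\theta|<\frac{3n+1}{5n-1}$, and it requires all of these pieces simultaneously. Your statement that a ``favorable negative semidefinite term $-F^{ij}(u^ku_{ki})(u^\ell u_{\ell j})/v^4$ emerges'' is in fact of the wrong sign: that contraction is nonnegative for positive semidefinite $F^{ij}$; the genuinely negative term is $-F^{ij}u_{\ell i}u_{\ell j}/v$, and whether its favorable part survives after elimination of $u_{11},u_{1\alpha}$ is precisely the delicate calculation you have not carried out. As written, the proposal does not constitute a proof.
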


 \begin{proof}
 	Define a function
 	\begin{equation*}
 	\Phi:=(1+Kd)v+\cos\theta \sigma(\nabla u,\nabla d),
 	\end{equation*}
 	where $K>0$ is the positive constant to be determined later.
 	Assume that $\Phi$ attains its maximum value at $(x_0,t_0)\in\overline{ \mathbb{S}}^n_+\times [0,T]$. We divide it into the following three cases to complete the proof.
 	
	\
	
 	\noindent \textbf{Case 1}: $(x_0,t_0)\in\partial\mathbb{S}^n_+\times [0,T]$. At $x_0$, we choose local  coordinates such that $\frac{\partial}{\partial x_n}$ be the inner normal direction of $\partial\mathbb{S}^n_+$, which is exactly equal to $-\partial_{\beta}$ and corresponds to $\nabla d$. And let $\{x_{i}\}_{i=1}^{n-1}$ be the geodesic coordinate of $x_0\in\partial\mathbb{S}^n_+$. Along the geodesic $x_n=t$ ($0<t\leq \varepsilon$), one takes the parallel transport of tangential direction $\frac{\partial}{\partial x_{i}}$ $(1\leq i\leq n-1)$  to establish the geodesic coordinate in the neighborhood around point $x_0$ in $\overline{\mathbb{S}^n_+}$.
 	
 	First, we notice that $\Phi=v+\cos\theta u_n$ on the boundary $\partial\mathbb{S}^n_+$ from the boundary condition in \eqref{MMCF with capillary u}. We denote $\nabla' u$ and $u_n$ 
	 the tangential and  the normal part of $\nabla u$ on the
 	boundary by our choice of coordinates above. The boundary condition $u_n=-\cos\theta v$ implies that
 	\begin{equation}\nonumber
 	\begin{split}
 	u_n^2=\cos^2\theta v^2=\cos^2 \theta (1+|\nabla' u|^2+u_n^2),
 	\end{split}
 	\end{equation}
 	in other words,
 	\begin{equation}\label{normal part control by tangential part on bdry}
 	\begin{split}
 	u_n^2=\cot^2 \theta (1+|\nabla' u|^2),
 	\end{split}
 	\end{equation}
 	Moreover  we have
 	\begin{equation}\nonumber
 	\begin{split}
 	\Phi&=v\sin^2\theta=\sqrt{1+|\nabla'u|^2+u_n^2} \sin^2\theta=|\csc\theta|\sqrt{1+|\nabla'u|^2}\sin^2\theta
 	\\&= \sqrt{1+|\nabla'u|^2}|\sin\theta|.
 	\end{split}
 	\end{equation}
 	From the Gauss-Weingarten equation we have
 	\begin{equation}\nonumber
 	\begin{split}
 	\nabla_nv&=\frac{  \nabla_ku\nabla_{nk}u}{v}
 	=\frac{\sum\limits_{i=1}^{n-1} \nabla_{i}u\nabla_{ni}u }{v}-\cos \theta \nabla_{nn}u
 	\\&=\frac{1}{v} \sum\limits_{i=1}^{n-1} \big(u_{i}u_{ni} +\sum_{j=1}^{n-1}u_{i}b_{ij }u_{j}\big)-\cos \theta \nabla_{nn}u
 	\\&=\sum\limits_{i=1}^{n-1}\frac{ u_{i}u_{ni}}{v} -\cos \theta \nabla_{nn}u,
 	\end{split}
 	\end{equation}
 	where $b_{ij}:=\sigma( \nabla _{e_{i}} e_n,e_{i})=0$ is the second fundamental form of $\partial \mathbb{S}^n_+$ in $\mathbb{S}^n_+$ for $1\leq i,j\leq n-1$.
 	Then at $x_0\in\partial\mathbb{S}^n_+$, from the Hopf lemma, it implies that
 	\begin{equation}\label{normal direction at boundary}
 	\begin{split}
 	0&\geq \nabla_n \Phi(x_0) 
 	=\nabla_nv+Kv\nabla_nd+\nabla_n(  u_kd_k)\cos \theta
 	\\&= \nabla_nv+Kv+  \nabla_{kn}ud_k\cos\theta+ u_k\nabla_{kn}d\cos\theta 
 	\\&=\frac{1}{v} \sum\limits_{i=1}^{n-1} u_{i}u_{ni}+Kv +  u_k\nabla_{kn}d\cos\theta .
 	\end{split}
 	\end{equation}
 	Since $\{ \partial_{x_i}\}_{i=1}^{n-1}$ are the tangential vector fields on $\partial\mathbb{S}^n_+$, for $1\leq i\leq n-1$, we have that	
 	\begin{equation}\nonumber
 	\begin{split}
 	0=\nabla'_{i}\Phi (x_0)&=v_{i}+u_{ni}\cos \theta.
 	\end{split}
 	\end{equation}
 	This implies that
 	\begin{equation}\label{tangential derivative to varphi}
 	\begin{split}
 	v_{i}=-u_{ni}\cos \theta.
 	\end{split}
 	\end{equation}
 	By differentiating the boundary condition of \eqref{MMCF with capillary u} and combining with \eqref{tangential derivative to varphi}  we have   that
 	\begin{equation}\nonumber
 	\begin{split}
 	u_{ni}&=-\nabla'_{i} (\cos\theta v)=\cos^2\theta u_{ni}.
 	\end{split}
 	\end{equation}
 	Since $|\cos\theta|<1$, we get
 	 \begin{equation}\label{term 1}
 	u_{ni}=0, \qquad \forall 1\leq i\leq n-1.
 	\end{equation}
 	Substituting equation \eqref{term 1} into equation \eqref{normal direction at boundary}, we conclude that
 	\begin{equation}\nonumber
 	\begin{split}
 	0& \geq \frac{1}{v} \sum\limits_{i=1}^{n-1} u_{i}u_{ni}+ K{v}  + u_k\nabla_{kn}d\cos\theta
 	=K{v} +u_k\nabla_{kn}d\cos\theta
 	\\& \geq \Phi\big( K\frac{1}{\sin^2\theta}-C_1\big),
 	\end{split}
 	\end{equation}
 	for some universial positive constant $C_1$. By choosing $K$ large enough, say $K:={2C_1}$, we get a contradiction. So \textbf{Case 1} is impossible.
 	
	\
	
 	\noindent\textbf{Case 2}: $(x_0,t_0)\in \overline{ \mathbb{S}^n_+}\times\{0\}$. In this case  we have
 	\begin{equation}\nonumber
 	\begin{split}
 	\Phi(x,t) &\leq \Phi(x_0,0)=(1+Kd)\sqrt{1+|\nabla u_0|^2}+\sigma( \nabla u_0,\nabla d)\cos\theta
 	\\& \leq C.
 	\end{split}
 	\end{equation}
	It yields  that
 	\begin{equation}
 	\sup_{\mathbb{S}^n_+\times[0,T]} v\leq C,
 	\end{equation}
 	where $C$ is a positive constant depending only on $n$ and $u_0$.	
 	
	\
	
 	\noindent\textbf{Case 3}: $(x_0,t_0)\in \mathbb{S}^n_+\times(0,T]$. In this case, we have
 	\begin{equation}\label{first derivative interior}
 	0=\nabla_i \Phi(x_0,t_0)=(1+Kd)v_i+Kd_iv+\cos\theta (u_ld_l)_i,
 	\end{equation} 	
 	for all $1\leq i\leq n$, or  equivalently, 
 	\begin{equation}\label{u_li}
 	\big((1+Kd)\frac{\nabla_lu}{v}+\nabla_ld\cos\theta)\nabla_{il}u=- \nabla_lu\nabla_{il}d\cos\theta-Kd_iv.
 	\end{equation}
 	At $(x_0,t_0)$, by rotating the geodesic coordinate $\{(x_1,x_2,\ldots,x_n)\}$  we may assume 
 	\begin{equation}\nonumber
 	|\nabla u|=u_1>0,\quad\text{and}\quad \{\nabla_{\alpha\beta}u\}_{2\leq \alpha,\beta\leq n}\quad \text{is diagonal}.
 	\end{equation}
 	We may also assume that $u_1(x_0,t_0)$ large enough in the below computation, such that $u_1, v=\sqrt{1+u_1^2}$, and $\Phi=(1+Kd)v+u_1d_1\cos\theta$ 
	 are equivalent to each other at $(x_0,t_0)$. Otherwise, we have completed the proof. All the computation below are done at the point $(x_0,t_0)$.
 	
 	First it is easy to see
 	\begin{equation}\label{u1 alpha u alpha}
 	\big[(1+Kd)\frac{u_1}{v}+\cos\theta d_1\big]u_{1\alpha}=-\cos\theta u_{\alpha\alpha}d_{\alpha}-\cos\theta u_1d_{1\alpha}-Kd_{\alpha}v,
 	\end{equation}
 	and
 	\begin{equation}\label{u 11 first}
 	\big[(1+Kd)\frac{u_1}{v}+\cos\theta d_1\big]u_{11}=-\cos\theta u_{\alpha 1}d_{\alpha}-\cos\theta u_1d_{11}-Kd_{1}v.
 	\end{equation}
 	Denote $S:=(1+Kd)\frac{u_1}{v}+\cos\theta d_1$.  It is easy to check   that $2+K\geq S\geq C(\delta,\theta)>0$ if we assume that $u_1\geq \delta>0$, otherwise we have obtained the estimate. Equation \eqref{u1 alpha u alpha} yields that
 	\begin{equation}\label{u1 alpha}
 	u_{1\alpha}=-\frac{\cos\theta d_{\alpha}}{S}u_{\alpha\alpha}-\frac{1}{S}\big(\cos\theta u_1d_{1\alpha}+Kd_{\alpha}v\big).
 	\end{equation}
 	Substituting equation \eqref{u1 alpha} into equation \eqref{u 11 first}, we conclude that
 	\begin{equation}\label{u_11}
 	\begin{split}
 	u_{11}&=-\frac{1}{S}\cos\theta u_{\alpha 1}d_{\alpha}+\frac{1}{S}\big(-\cos\theta u_1d_{11}-Kd_{1}v\big)
 	\\&=\frac{\cos^2\theta }{S^2}\sum_{\alpha=2}^{n}d_{\alpha}^2u_{\alpha\alpha}+\Big[ \sum_{\alpha=2}^{n}\frac{\cos\theta d_{\alpha}}{S^2}\big(\cos\theta u_1d_{1\alpha}+Kd_{\alpha}v \big)\\&\quad- \frac{1}{S}\big(\cos\theta u_1d_{11}+Kd_{1}v\big) \Big]
 	\\&=\frac{\cos^2\theta }{S^2}\sum_{\alpha=2}^{n}d_{\alpha}^2u_{\alpha\alpha}+O(v).
 	\end{split}
 	\end{equation}
 	On the other hand, we have
 	\begin{equation}\label{second derivative}
 	\begin{split}
 	0&\leq (\partial_t- F^{ij}\nabla_{ij}- F_{p_i}\nabla_i)\Phi
 	\\&
 	=\frac{(1+Kd)}{v} 
 	{u_l(u_{lt}- F^{ij}u_{lij}- F_{p_i}u_{li})}+d_k\cos\theta \big(u_{kt}-  F^{ij}u_{kij} - F_{p_i}u_{ki}\big)\\&\quad+ (1+Kd)\Big( \frac{F^{ij}u_lu_{li}u_ku_{kj}}{v^3}-  \frac{F^{ij}u_{li}u_{lj}}{v}\Big)	
 	-\big(2 F^{ij}u_{ki}d_{kj}\cos\theta	\\&\quad+2K F^{ij}d_iv_j\big) -\big( F^{ij}u_k d_{kij}\cos\theta 
 	+K F^{ij}d_{ij}v\big)
 	- F_{p_i}\big(Kd_iv+\cos\theta  u_kd_{ki}\big)
 	\\&:=
 	\text{J}_1+\text{J}_2+\text{J}_3+\text{J}_4+\text{J}_5+\text{J}_6.
 	\end{split}
 	\end{equation}
 	Next we  carefully handle these six terms one by one.  Differentiating the main equation in \eqref{MMCF with capillary u}, we get
 	\begin{align}\label{first derivative to equation}
 	u_{tk}= F^{ij}u_{ijk}+ F_{p_i}u_{ik}+F_{\rho}\rho u_k+F_{\beta}\sigma(\partial_{\beta},e_k).
 	\end{align}
 	Combining with the communicative formula on $\mathbb{S}^n_+$ 
\begin{align}
u_{ijk}=u_{kij}+u_j\sigma_{ik}-u_k\sigma_{ij},
\end{align}
 	we have
 	\begin{equation*}
 	\begin{split}
 	\text{J}_1&:=\frac{(1+Kd)}{v}  
 	{u_l(u_{lt}-F^{ij}u_{lij}-F_{p_i}u_{li})}
 	\\&=\Big\{\frac{(1+Kd)|\nabla u|^2}{2v^2} \rho(1-\frac{1}{\rho^2})\frac{u_{11}}{v^2}-\frac{(1+Kd)u_{\beta}}{v^2}\sin\beta \frac{u_{11}}{v^2}\Big\}	
 	\\&\quad+
 	\Big\{\frac{(1+Kd)}{2v^2} |\nabla u|^2 (\rho-\frac{1}{\rho}) \sum_{\alpha=2}^{n}u_{\alpha\alpha}\Big\}
 	- \Big\{\frac{(1+Kd)|\nabla u|^2}{v} \rho\cdot \frac{n}{2}\big(1+\frac{1}{\rho^2}\big)\big(\frac{|\nabla u|^2}{v}
 	\\&\quad	
 	+\cos\theta\sin\beta u_{\beta}\big) \Big\}
 	+\Big\{-\frac{(1+Kd)u_{\beta}}{v^2} \sin\beta \sum_{\alpha=2}^{n}u_{\alpha\alpha}+  \frac{(1+Kd)u_{\beta}}{v}\Big[\frac{n\cos\beta}{v}u_{\beta}
 	\\&\quad-\cos\theta \frac{\rho^2-1}{2\rho} n\cos\beta u_{\beta} -\frac{n\cos\theta}{2\rho}(\rho^2+1)\sin\beta\Big]
 	+\frac{(1-n)(1+Kd)|\nabla u|^2}{\rho e^w v^2} \Big\}
 	\\&:=\text{J}_{11}+\text{J}_{12}+\text{J}_{13}+\text{J}_{14}.
 	\end{split}
 	\end{equation*}
 	Now we tackle the above terms one by one. First, by using equation \eqref{u_11}, we obtain that
 	\begin{align*}
 	\begin{split}
 	\text{J}_{11}
 	&=\Big[\frac{(1+Kd)|\nabla u|^2}{2v^2} \rho(1-\frac{1}{\rho^2})\frac{1}{v^2}-\frac{(1+Kd)u_{\beta}}{v^2}\sin\beta \frac{1}{v^2}
 	\Big]\cdot \Big[ \frac{\cos^2\theta }{S^2}\sum_{\alpha=2}^{n}d_{\alpha}^2u_{\alpha\alpha}+O(u_1)\Big]
 	\\&=O(\frac{1}{v^2}) \sum_{\alpha=2}^{n}|u_{\alpha\alpha}|+O(\frac{1}{v}).
 	\end{split}
 	\end{align*}
 	It is also not difficult to show that $\text{J}_{14}=O(\frac{1}{v}) \sum_{\alpha=2}^{n}|u_{\alpha\alpha}|+O(v).$ 
 	$J_{12}$  will be considered 
 	later, together with $J_{22}$ and $J_{32}$,  and $J_{13}$ with $J_{23}$. See below.
 	For the term $\text{J}_3$, we have
 	\begin{equation*}
 	\begin{split}
 	\text{J}_3&:=(1+K_1d)\Big( \frac{F^{ij}u_lu_{li}u_ku_{kj}}{v^3}-  \frac{F^{ij}u_{li}u_{lj}}{v}\Big)\\&=\frac{(1+Kd)}{\rho e^w v}\big(-\frac{1}{v^5}u_{11}^2-\frac{2}{v^3}\sum_{\alpha=2}^{n}u_{1\alpha}^2\big) -(1-\varepsilon)\frac{(1+Kd)}{\rho e^w v^2}\sum_{\alpha=2}^{n}u_{\alpha\alpha}^2\\&\quad-\varepsilon \frac{(1+Kd)}{\rho e^w v^2}\sum_{\alpha=2}^{n}u_{\alpha\alpha}^2
 	\\&:=\text{J}_{31}+\text{J}_{32}+\text{J}_{33}.
 	\end{split}
 	\end{equation*}
 	From equation \eqref{first derivative to equation}, we deduce that
 	\begin{equation*}
 	\begin{split}
 	\text{J}_2&:=d_k\cos\theta \big(u_{kt}-F^{ij}u_{kij} -F_{p_i}u_{ki}\big) 
 	\\&=\cos\theta \big(F_{\rho}\rho d_ku_k+F_{\beta} d_{\beta}\big)-\cos\theta \sigma( \nabla u,\nabla d) \mathcal{F}+\cos\theta F^{ij}u_id_j
 	\\&=\Big[\cos\theta \sigma( \nabla u,\nabla d) (\rho-\frac{1}{\rho}) \frac{u_{11}}{2v^3}-\cos\theta \sin\beta d_{\beta} \frac{u_{11}}{v^3}\Big]+\frac{\cos\theta}{2v} \sigma( \nabla u,\nabla d) (\rho-\frac{1}{\rho})\sum_{\alpha=2}^{n}u_{\alpha\alpha}
 	\\&\quad -\frac{n}{2}(\rho+\frac{1}{\rho})\cos\theta \sigma( \nabla u,\nabla d) \big(\frac{|\nabla u|^2}{v}+\cos\theta \sin\beta u_{\beta}\big) +\Big\{-\frac{\cos\theta \sin\beta}{v}d_{\beta}\sum_{\alpha=2}^{n}u_{\alpha\alpha}\\&\quad-\frac{(n-1)u_1^2+n}{\rho v^3 e^w} \cos\theta\sigma( \nabla u,\nabla d)
 	+\cos\theta\frac{\sigma( \nabla u,\nabla d)}{\rho v^3 e^w}+\cos\theta d_{\beta}\Big[ \frac{n\cos\beta}{v}u_{\beta} \\&\quad -\cos\theta\frac{\rho^2-1}{2\rho}n\cos\beta  u_{\beta}-\frac{n\cos\theta}{2\rho} \big({\rho^2}+1\big)\sin\beta  \Big]+\frac{n}{2} \cos^2\theta (1-\frac{1}{\rho^2})\cos\beta \rho \sigma( \nabla u,\nabla d)\Big\}
 	\\&:=\text{J}_{21}+\text{J}_{22}+\text{J}_{23}+\text{J}_{24}.
 	\end{split}
 	\end{equation*}
 	For these terms,  we first notice that $\text{J}_{24}=O(\frac{1}{v}) \sum_{\alpha=2}^{n}|u_{\alpha\alpha}|+O(v).$
 	Equation \eqref{u_11} implies $\text{J}_{21}=O(\frac{1}{v^2}) \sum_{\alpha=2}^{n}|u_{\alpha\alpha}|+O(\frac{1}{v}).$
 	Furthermore, we get by using the arithmetic-geometric inequality 
 	\begin{equation*}
 	\begin{split}
 	&\text{J}_{12}+\text{J}_{22}+\text{J}_{32}
 	\\&:=\frac{(1+Kd)}{2v^2} |\nabla u|^2 (\rho-\frac{1}{\rho}) \sum_{\alpha=2}^{n}u_{\alpha\alpha}+\cos\theta \frac{\sigma( \nabla u,\nabla d)}{2v}(\rho-\frac{1}{\rho})\sum_{\alpha=2}^{n}u_{\alpha\alpha}
 	\\&\quad-(1-\varepsilon)\frac{1+Kd}{\rho e^w v^2}\sum_{\alpha=2}^{n} u_{\alpha\alpha}^2
 	\\&=S(\rho-\frac{1}{\rho})\frac{u_1}{2v}\sum_{\alpha=2}^{n}u_{\alpha\alpha}-(1-\varepsilon)\frac{1+Kd}{\rho e^w v^2}\sum_{\alpha=2}^{n}u_{\alpha\alpha}^2
 	\\&\leq \frac{S}{1+Kd}\cdot  \frac{(n-1)S(\rho-\frac{1}{\rho})^2 u_1^2}{16(1-\varepsilon)} \rho e^w
 	\\&\leq \frac{(n-1)(1+|\cos\theta|)S }{16(1-\varepsilon)} (\rho-\frac{1}{\rho})^2\rho e^wu_1^2.
 	\end{split}
 	\end{equation*}
 	Before continuing,  we fix a constant $b_0\in (|\cos \theta|, \frac{3n+1}{5n-1})$, for $|\cos \theta|<\frac {3n+1}{5n-1}$.
	If 
 	\begin{equation}
 	\begin{split}
 	\frac{|\nabla u|^2}{v}+\cos\theta \sin\beta u_{\beta}< (1-b_0)u_1,
 	\end{split}
 	\end{equation}
	then
 	\begin{equation*}
 	0<b_0-|\cos\theta|<b_0-\sin\beta |\cos\theta| |u_\beta| u_1^{-1}<1-\frac{u_1}{v}, 	\end{equation*}
	which implies that $u_1$ is uniformly bounded.
 	Therefore, we may assume that \begin{equation}
 	\begin{split}
 	\frac{|\nabla u|^2}{v}+\cos\theta \sin\beta u_{\beta}\geq (1-b_0)u_1.
 	\end{split}
 	\end{equation}
 	Now we conclude that this yields
 	\begin{equation*}
 	\begin{split}
 	&\text{J}_{13}+\text{J}_{23}\\&:=-\frac{n}{2}\frac{1+Kd}{v}|\nabla u|^2 (\rho+\frac{1}{\rho}) \big(\frac{|\nabla u|^2}{v}+\cos\theta \sin\beta u_{\beta}\big)-	\frac{n}{2}\cos\theta \sigma( \nabla u,\nabla d) \\&\quad\big(\frac{|\nabla u|^2}{v}+\cos\theta \sin\beta u_{\beta}\big)
 (\rho+\frac{1}{\rho})
 	\\&=-\frac{n}{2}u_1 \big(\frac{|\nabla u|^2}{v}+\cos\theta \sin\beta u_{\beta}\big) (\rho+\frac{1}{\rho}) S
 	\\&\leq -\frac{n}{2}(1-b_0) S u_1^2  (\rho+\frac{1}{\rho}).
 	\end{split}
 	\end{equation*}
 	Since  $|\cos\theta|\leq b_0<\frac{3n+1}{5n-1}$,  by choosing $\varepsilon=\frac{\varepsilon_0}{2}\in (0,1)$ with  $\varepsilon_0:=\frac{3n+1-b_0(5n-1)}{4n(1-b_0)}>0$, we have  that $(n-1)(1+b_0)-4(1-\varepsilon)(1-b_0)n<0$. Then we deduce
 	\begin{equation*}
 	\begin{split}
 	&\text{J}_{13}+\text{J}_{23}+\text{J}_{12}+\text{J}_{22}+\text{J}_{32}
 	\\&\leq 
 	-\frac{n}{2}(1-b_0) S u_1^2  (\rho+\frac{1}{\rho})+u_1^2\frac{(n-1)(1+b_0)S }{16(1-\varepsilon)} (\rho-\frac{1}{\rho})^2\rho e^w
 	\\&\leq u_1^2S\big[ \frac{(n-1)(1+b_0)}{16(1-\varepsilon)} (\rho-\frac{1}{\rho})^2\frac{2\rho}{\rho^2+1}-(1-b_0)\frac{n}{2}(\rho+\frac{1}{\rho})\big]
 	\\&=\frac{u_1^2S}{8\rho(\rho^2+1)(1-\varepsilon)} \Big\{\big[(n-1)(1+b_0)-4(1-\varepsilon)(1-b_0)n\big](\rho^4+1)
 	\\&\quad -\big[2(n-1)(1-b_0)+8(1-\varepsilon)(1-b_0)n\big]\rho^2 \Big\}
 	\\&\leq -\alpha_0 u_1^2,
 	\end{split}
 	\end{equation*}
 	where $\alpha_0$ is a positive constant, which only depends on $n,b_0$ and $\|u\|{_{C^0}}$.
 	Using equations \eqref{u1 alpha} and \eqref{u_11} again, we have
 	\begin{equation*} \begin{array}{rcl}
 	\text{J}_4+\text{J}_6&=& \ds\vs -2F^{ij}u_{ki}d_{kj}\cos\theta-2KF^{ij}d_iv_j-F_{p_i}\big(Kd_iv+\cos\theta \sum_{k=1}^{n}u_kd_{ki}\big)\\
 &=& O(\frac{1}{v}) \sum_{\alpha=2}^{n}|u_{\alpha\alpha}|+O(1).
 	\end{array}\end{equation*}
 	For term $J_5$, it is easy to see 
 	$
 	\text{J}_5:=-F^{ij}u_k d_{kij}\cos\theta 
 	-KF^{ij}d_{ij}v
=O(1).
 $
 	
 	By adding all above terms into equation \eqref{second derivative},  we have
 	\begin{equation*}
 	\begin{split}
 	0& 
 	\leq \frac{(1+Kd)}{\rho e^w v}\big(-\frac{1}{v^5}u_{11}^2-\frac{2}{v^3}\sum_{\alpha=2}^{n}u_{1\alpha}^2\big) -\varepsilon_0 \frac{(1+Kd)}{2\rho e^w v^2}\sum_{\alpha=2}^{n}u_{\alpha\alpha}^2-\alpha_0 u_1^2\\&\quad+O(\frac{1}{v}) \sum_{\alpha=2}^{n}|u_{\alpha\alpha}|+O(v)
 	\\&\leq  -\varepsilon_0 \frac{(1+Kd)}{2\rho e^w v^2}\sum_{\alpha=2}^{n}u_{\alpha\alpha}^2+\frac{C_2}{v} \sum_{\alpha=2}^{n}|u_{\alpha\alpha}|-\alpha_0 u_1^2+C_1v
 	\\&\leq -\alpha_0 u_1^2+C_1v+\frac{C_2^2 \rho e^w}{2\varepsilon_0 (1+Kd)}.
 	\end{split}
 	\end{equation*}
 	Hence we conclude  that
 	\begin{equation*}
 	u_1\leq C.
 	\end{equation*}
 	 We have completed the proof.	
 \end{proof}
 
 \begin{rem}
 We remark that the condition $ |\cos\theta|\leq  b_0<\frac{3n+1}{5n-1}$ was only used  in the estimate of term $\text{J}_{13}+\text{J}_{23}+\text{J}_{12}+\text{J}_{22}+\text{J}_{32}$. And the main dominating term is J$_{13}$, which ensures us to obtain the gradient estimate under this contact angle range.
 \end{rem}
 The higher order a priori estimates of $u$ follow from the uniform $C^0$ and $C^1$ estimates.  Denote  $j(p):= {\sigma(p,\partial_\beta\rangle} -\cos\theta \sqrt{1+|p|^2}$ for $p\in\mathbb{R}^n$.   It is easy to see  that
 \begin{align*}
 \sigma(j_{p}\big|_{p=\nabla u},\partial_\beta)= 1-\cos^2\theta >1-b_0^2>0,
 \end{align*} which means that we have a uniformly oblique boundary condition. To be more precise, from the classical parabolic theory for quasi-linear parabolic equations (See \cite{LSU} for instance), it follows that
 
 \begin{prop}\label{higher estimates}
 	If $u(\cdot,t)$ solves the initial boundary value problem \eqref{MMCF with capillary u} on interval $[0, T^*)$ for $T^*\in (0, \infty]$ with $|\cos\theta|<\frac{3n+1}{5n-1}$, then for any $0<T<T^*$, we have
 	\begin{equation*}
 	\|u(\cdot,t)\|_{C^k}\leq C,\quad 0\leq t\leq T,
 	\end{equation*}
 	where $C$ is a positive constant only depends on $k$, and the initial values and the covariant derivatives with respect to the round metric on $\mathbb{S}^n_+$. It follows, in particular, $T^*=\infty$.
 \end{prop}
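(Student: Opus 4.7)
The plan is to bootstrap regularity from the uniform $C^0$ and $C^1$ estimates proved above, using the standard theory for quasilinear parabolic equations with nonlinear oblique boundary data (cf.\ \cite{LSU}, Chapter V, and Lieberman's analogous results). Once the $C^1$ bound is in hand, the operator $F$ becomes uniformly parabolic: the principal matrix $F^{ij}=\frac{1}{\rho v e^w}(\sigma^{ij}-u^iu^j/v^2)$ has eigenvalues trapped between positive constants depending only on $\|u\|_{C^0}$ and $\|u\|_{C^1}$. More importantly, linearizing the boundary condition $\nabla_{\partial_\beta}u-\cos\theta\sqrt{1+|\nabla u|^2}=0$ in the conormal direction produces the obliqueness coefficient $1-\cos\theta\, u_\beta/v$, which on the boundary equals $\sin^2\theta$ by the boundary condition itself, and is therefore uniformly bounded away from zero since $|\cos\theta|\le b_0<1$. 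Hence the problem is uniformly parabolic with a uniformly regular oblique boundary condition.

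First, I would apply a Krylov--Safonov / Lieberman-type H\"older estimate for the gradient: for uniformly parabolic quasilinear equations with regular oblique boundary conditions the gradient is H\"older continuous up to the boundary with a quantitative bound. This promotes the $C^1$ estimate to $u\in C^{1,\alpha}(\overline{\mathbb{S}}^n_+\times[0,T])$ for some $\alpha\in(0,1)$, with constants depending only on $n$, $b_0$, the $C^0$, $C^1$ norms, and the initial data.

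Second, I would freeze coefficients. Once $u\in C^{1,\alpha}$, the matrix $F^{ij}$, the first-order coefficient $F_{p_i}$, and the linearization of the boundary operator are all in $C^\alpha$, and the boundary operator remains uniformly oblique. Schauder theory for linear parabolic oblique derivative problems then yields $u\in C^{2,\alpha}(\overline{\mathbb{S}}^n_+\times[0,T])$.

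Finally, I would iterate. Differentiating the scalar equation and the boundary condition tangentially along $\partial\mathbb{S}^n_+$ and reapplying the Schauder estimate produces by induction $u\in C^{k,\alpha}$ for every $k\ge 2$; high-order normal derivatives at the boundary are recovered from the equation itself. The main subtle point is that each successive tangential differentiation must preserve uniform obliqueness and the Lopatinski-type compatibility structure at the boundary; this holds because $\theta$ is a fixed constant along $\partial\mathbb{S}^n_+$ and $|\cos\theta|<1$ gives strict obliqueness at every order. Standard compatibility conditions at $t=0$ are assumed to hold for the smooth initial datum $u_0$, and Proposition \ref{higher estimates} follows.
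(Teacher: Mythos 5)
Your bootstrap argument is exactly what the paper's one-line citation to the classical quasilinear parabolic theory in \cite{LSU} refers to, and the key computation that the obliqueness coefficient reduces to $\sin^2\theta$ on the boundary (hence uniformly bounded away from zero since $|\cos\theta|\le b_0<1$) is the correct reason the theory applies. The proposal is correct and takes essentially the same approach as the paper, just with the details written out.
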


 \begin{proof}[\bf{Proof of Theorem} \ref{Main thm}]We only need to show  that each subsequential limit is a spherical cap.
 	
 	As is shown in the proof of Proposition \ref{energy de}, integrating the equation \eqref{energy deri} over $t\in [0,+\infty)$ and combining with Proposition \ref{higher estimates}, we have that
 	\begin{align*}
 	\int_0^{\infty}\int_{\mathbb{S}^n_+} \sum_{i<j} |\kappa_i-\kappa_j|^2 d\mu(y) dt\leq C,	\end{align*}
 	where $\kappa_i(y,t)$ is the principal curvature of radial graph at $(y,t)\in \mathbb{S}^n_+\times [0,\infty)$.
 	Due to the uniform estimates from Proposition \ref{higher estimates}, one can show that
 	\begin{align*}
 	\lim\limits_{t\to \infty}|\kappa_i-\kappa_j|^2=0, \quad \forall 1\leq i,j\leq n.
 	\end{align*}
 	Therefore any convergent subsequence of $x(\cdot,t)$ must converge to a  spherical cap as $t\to +\infty$. Moreover the capillary boundary condition implies that this spherical caps intersects  with the sphere at a  contact angle $\theta$.
	Hence it should belong to the family 
	given  in Remark \ref{static model}. Hence we have completed the proof of our main theorem.
 \end{proof}

\begin{acknowledgements}
This work is supported  partly by SPP 2026 of DFG ``Geometry at infinity''. We would like to thank the referee for his or her critical reading and helpful suggestion.
\end{acknowledgements}

%
%


\begin{thebibliography}{}
%

\bibitem{AS} Ainouz A. and Souam R.,  Stable capillary hypersurfaces in a half-space or a slab. {Indiana Univ. Math. J.} {65}, no. 3, 813-831 (2016)

\bibitem{AF} Alikakos N. D. and  Freire A. , The normalized mean curvature flow for a small bubble in a Riemannian manifold. {J. Differential Geom.} {64} no. 2, 247–303 (2003) 

\bibitem{AWu} Altschuler S. J. and Wu L., Translating surfaces of the non-parametric mean curvature flow with prescribed contact angle. {Calc. Var. Partial Differential Equations} {2}  no. 1, 101-111 (1994)

\bibitem{A} Andrews B., Volume-preserving anisotropic mean curvature flow. {Indiana Univ. Math. J.} {50} no. 2, 783–827 (2001)


\bibitem{AW} Andrews B. and Wei Y., Quermassintegral preserving curvature flow in hyperbolic space. {Geom. Funct. Anal.} {28}, no. 5, 1183-1208 (2018)

\bibitem{AW2} Andrews B. and Wei  Y.,  Volume preserving flow by powers of $k$-th mean curvature, {Arxiv:1708.03982.}

\bibitem{CM} Cabezas-Rivas E. and Miquel V., {Volume preserving mean curvature flow in the hyperbolic space}, {Indiana Univ. Math. J.} {56}, no. 5, 2061-2086 (2007)


\bibitem{Ec} Ecker K., Regularity theory for mean curvature flow. Progress in Nonlinear Differential Equations and their Applications, 57. Birkh\"{a}user Boston, Inc., Boston, MA, 2004. 


\bibitem{Finn1986} Finn R., {Equilibrium Capillary Surfaces}, Springer-Verlag, New York, 1986.


\bibitem{Gage1986} Gage M. E., On an area-preserving evolution for planar curves, {Contemp. Math.} {51} , 51-62 (1986) 

\bibitem{GMWW}Gao Z. , Ma X., Wang P. and Weng L., Nonparametric mean curvature flow and capillary problem with nearly vertical contact angle condition, to appear in J. Math. Study, (2020)


\bibitem{Ge}Gerhardt C., Curvature problems. Series in Geometry and Topology, 39. International Press, Somerville, MA, 2006

\bibitem{Gi}Giusti E., Boundary value problems for non-parametric surfaces of prescribed mean curvature. {Ann. Scuola Norm. Sup. Pisa Cl. Sci.} (4) {3}, no. 3, 501-548 (1976)

\bibitem{Guan}Guan B., Mean curvature motion of nonparametric hypersurfaces with contact angle condition. {Elliptic and parabolic methods in geometry}, 47-56, A K Peters, Wellesley, MA, 1996.

\bibitem{Guan1998} Guan B., Gradient estimates for solutions of nonparametric curvature evolution with prescribed contact angle condition. Monge Ampère equation: applications to geometry and optimization (Deerfield Beach, FL, 1997), 105-112, Contemp. Math., 226, Amer. Math. Soc., Providence, RI, 1999

\bibitem{GL}Guan P. and  Li J., A mean curvature type flow in space forms. {Int. Math. Res. Not.} no. {13}, 4716-4740 (2015)

\bibitem{GuanLiWang} Guan P., Li J. and Wang M., A volume preserving flow and the isoperimetric
problem in warped product spaces, {Trans. Amer. Math. Soc.} {372} , 2777-2798 (2019)


\bibitem{GuanWang} Guan P. and Wang G.,  Geometric inequalities on locally conformally flat manifolds, {Duke Math. J.} {124}, no. 1, 177-212 (2004)


\bibitem{Hui1984}Huisken  G., Flow by mean curvature of convex surfaces into spheres. {J. Differential Geom.} {20} (1984), no. 1, 237-266 

\bibitem{Hui1989} Huisken G., Nonparametric mean curvature evolution with boundary conditions, {J. Differential Equations.} {77} (1989), no. 2, 369-378 

\bibitem{Hui1987} Huisken G., The volume preserving mean curvature flow, {J. Reine Angew. Math.}  {382} (1987), 35-48


\bibitem{Ko}  Korevaar N. J., Maximum principle gradient estimates for the capillary problem. {Comm. Partial Differential Equations.} {13} (1988), no. 1,1-31 


\bibitem{LSU}
Ladyzenskaja O., Solonnikov V. and Ural'ceva  N. , {Linear and quasilinear equations of parabolic type.} Translations of Mathematical Monographs, Vol. {23} American Mathematical Society, Providence, R.I. 1968 xi+648 pp


\bibitem{LS1} Lambert B. and Scheuer J., The inverse mean curvature flow perpendicular to the sphere. {Math. Ann.} {364} (2016), no. 3-4, 1069-1093

\bibitem{LS2} Lambert B. and Scheuer J., A geometric inequality for convex free boundary hypersurfaces in the unit ball. {Proc. Amer. Math. Soc.} {145} (2017), no. 9, 4009-4020 


\bibitem{LX} Li H. and Xiong C., Stability of capillary hypersurfaces in a Euclidean ball. {Pacific J. Math.} {297} (2018), no. 1, 131-146

\bibitem{LW1} de Lira Jorge H. S.  and Gabriela A. , Mean curvature flow of Killing graphs. {Trans. Amer. Math. Soc.} {367} (2015), no. 7, 4703-4726

\bibitem{L} L\'opez R., Constant mean curvature surfaces with boundary. Springer Monographs in Mathematics. Springer, Heidelberg, 2013

\bibitem{M1} Marquardt T., Inverse mean curvature flow for star-shaped hypersurfaces evolving in a cone. {J. Geom. Anal.} {23} (2013), no. 3, 1303-1313

\bibitem{Mc}McCoy J. A., The mixed volume preserving mean curvature flow. {Math. Z.} {246} (2004), no. 1-2, 155-166 

\bibitem{RS} Ros A. and Souam R., On stability of capillary surfaces in a ball. {Pacific J. Math.} {178} (1997), no. 2, 345-361

\bibitem{SX} Scheuer J. and Xia C., Locally constrained inverse curvature flows, {Trans. Am. Math. Soc.} {372} (2019) no. 10, 6771-6803

\bibitem{SWX}Scheuer J., Wang G. and Xia C., Alexandrov-Fenchel inequalities for convex hypersurfaces with free boundary in a ball. to appear in {J. Differ. Geom.} {ArXiv:1811.05776}


\bibitem{St}Stahl  A., Convergence of solutions to the mean curvature flow with a Neumann boundary condition. {Calc. Var. Partial Differential Equations} {4} (1996), no. 5, 421-441

\bibitem{WX1}Wang G. and Xia C., Uniqueness of stable capillary hypersurfaces in a ball. {Math. Ann.} {374} (2019), no. 3-4, 1845-1882

\bibitem{WX2} Wang G. and Xia C., Guan-Li type mean curvature flow for free boundary hypersurfaces in a ball, (2019) ArXiv: 1910.07253
\end{thebibliography}


\end{document}